\documentclass[preprint,3p,12pt,pdf]{elsarticle}

\usepackage{lineno,hyperref}
\modulolinenumbers[5]

\usepackage{hyperref}

\usepackage{epstopdf}

\usepackage{tikz}
\usetikzlibrary{arrows}

\usepackage{pst-node}
\usepackage{tikz-cd}

\usepackage[shellescape]{gmp}

\usepackage{mathrsfs}
\usepackage{amssymb}
\usepackage{amsfonts}
\usepackage{latexsym}
\usepackage{mathtools}
\usepackage{xcolor}
\usepackage{wrapfig}
\usepackage{floatflt}

\usepackage{mathtools}
\usepackage{extarrows}

\usepackage{graphicx}

\usepackage{subcaption}

\def\lb{\label}

\newcommand{\er}[1]{\textrm{(\ref{#1})}}

\newtheorem{theorem}{\bf Theorem}[section]
\newtheorem{lemma}[theorem]{\bf Lemma}

\def\a{\alpha}         
\def\b{\beta}          
\def\g{\gamma}         
\def\G{\Gamma}         
\def\d{\delta}

    \def\cJ{{\mathcal J}}       \def\mJ{{\mathscr J}}
        
\def\k{\kappa}

            \def\mP{{\mathscr P}}

    \def\cT{{\mathcal T}}

\def\ve{\varepsilon}   \def\vt{\vartheta}    \def\vp{\varphi}    

\def\Z{{\mathbb Z}}    \def\R{{\mathbb R}}   \def\C{{\mathbb C}}    
    \def\N{{\mathbb N}}   

\def\lt{\biggl}                  \def\rt{\biggr}
               \def\wt{\widetilde}
\def\no{\noindent}

\let\ge\geqslant                 \let\le\leqslant

\def\iy{\infty}
\def\sm{\setminus}               
\def\ss{\subset}                 \def\ts{\times}
\def\pa{\partial}

\def\el2{\ell^{\,2}}             \def\1{1\!\!1}

\def\arg{\mathop{\mathrm{arg}}\nolimits}

\def\Im{\mathop{\mathrm{Im}}\nolimits}

\def\Re{\mathop{\mathrm{Re}}\nolimits}

\def\sign{\mathop{\mathrm{sign}}\nolimits}

\def\BBox{\hspace{1mm}\vrule height6pt width5.5pt depth0pt \hspace{6pt}}

\newtheorem{corollary}[theorem]{\bf Corollary}

\let\ge\geqslant
\let\le\leqslant

\newcommand{\ca}{\begin{cases}}
	\newcommand{\ac}{\end{cases}}
\newcommand{\ma}{\begin{pmatrix}}
	\newcommand{\am}{\end{pmatrix}}
\renewcommand{\[}{\begin{equation}}
	\renewcommand{\]}{\end{equation}}
\def\eq{\begin{equation}}
	\def\qe{\end{equation}}

\def\BBox{\hspace{1mm}\vrule height6pt width5.5pt depth0pt \hspace{6pt}}

\begin{document}
	
	\begin{frontmatter}

		\title{Left-tail expansions for Schr\"oder branching processes with explicit convergence rates}

		\date{\today}

		\author
		{Anton A. Kutsenko}
	
	\address{University of Hamburg, MIN Faculty, Department of Mathematics, 20146 Hamburg, Germany; email: akucenko@gmail.com}
	
\begin{abstract}
In previous work, the density of the martingale limit in Schr\"oder branching processes was expressed as a convergent double power-law series with oscillatory terms. The proof relied on two assumptions, one of which imposed a geometric restriction on the critical angle of the Julia set of the offspring generating function near $1$. In this paper, we show that both assumptions can be removed.

We derive explicit bounds on the expansion coefficients, which imply locally uniform convergence of the double series. The coefficients decay exponentially in one summation index, with the rate explicitly determined by the critical angle, and super-exponentially in the other.

Finally, we investigate the behavior of the critical angle and describe regimes in which it approaches its minimum. This analysis shows how the geometry of the Julia set influences the magnitude of the oscillatory corrections in the left-tail expansion.
\end{abstract}

	\begin{keyword}
		Galton-Watson process, left-tail asymptotic, 
		Schr\"oder and Poincar\'e-type functional equations, Karlin-McGregor function, Fourier analysis
	\end{keyword}

	
\end{frontmatter}


{\section{Introduction}\lb{sec0}}
We consider a simple Galton-Watson branching process 
$$
 X_{t+1}=\sum_{j=1}^{X_t}\xi_{j,t},\ \ \ X_0=1,\ \ \ t\in\N\cup\{0\}
$$
in the supercritical case with the minimum family size $1$ - the so-called Schr\"oder case. The probability of the minimum family size is $0<p<1$. Thus, the probability-generating function has the form
\[\lb{001}
 P(z):=\mathbb{E}z^{\xi}=pz+p_2z^2+p_3z^3+....
\]
All $p_j\ge0$ and $P(1)=1$. Thus, $P(z)$ is analytic inside the unit disc $\mathbb{B}_1=\{z:\ |z|<1\}$. We also require that it be analytic in some neighborhood of $z=1$. In particular, the expectation
\[\lb{002}
 E=p+2p_2+3p_3+...<+\iy.
\]
Then one may define the {\it martingale limit} $W=\lim_{t\to+\iy}E^{-t}X_t$, the density of which can be expressed as a Fourier transform of some special function
\[\lb{003}
 w(x)=\frac1{2\pi}\int_{-\iy}^{+\iy}\Pi(\mathbf{i}y)e^{\mathbf{i}yx}dy,
\ \ \ {\rm where}\ \ \ 
 \Pi(z)=\lim_{t\to+\iy}\underbrace{P\circ...\circ P}_{t}(1-\frac{z}{E^{t}}),
\]
see, e.g., \cite{D1}. The function $\Pi(z)$ satisfies Poincar\'e-type functional equation 
\[\lb{004}
 \Pi(Ez)=P(\Pi(z)),\ \ \ \Pi(0)=1,\ \ \ \Pi'(0)=-1.
\]
The properties of Poincar\'e-type functions are well studied, see, e.g., \cite{M}. For convenience, we provide the related results with proofs in this manuscript. The existence and some properties of $\Pi(z)$ are discussed in Theorem \ref{T1}, Corollary \ref{C1}, and Theorem \ref{T3}. The existence (convergence) of the integral in \er{003} is discussed in the proof of the main Theorem. Since the Fourier integral is quite complex, any expansion and asymptotic analysis of $w(x)$ is welcome. Special attention is paid to the analysis of the tails as $x\to+0$ or $x\to+\iy$. It is proven in \cite{BB1} that $w(x)$ has an asymptotic
\[\lb{000}
 w(x)=x^{\a}V(x)+o(x^{\a}),\ \ \ x\to+0,
\]
with explicit $\a=-1-\ln p/\ln E=-1-\log_Ep$ and a continuous, positive, multiplicatively periodic function, $V$, with period $E$. Further references to this asymptotic are always based on the principal work \cite{BB1} and do not provide any formula for $V(x)$, see, e.g., the corresponding remark in \cite{FW2007}, \cite{FW} and \cite{S} devoted to the Schr\"oder case. Even such a result is already great because $w(x)$ is not simple. In \cite{K24}, under some assumptions, the complete expansion 
\[\lb{005}
w(x)=x^{\a}V_1(x)+x^{\a+\b}V_2(x)+x^{\a+2\b}V_3(x)+...,\ \ \ x>0
\]
is derived. Here, the certain value $\a$ is defined above and $\b=-\log_Ep>0$.  Moreover, 
\[\lb{006}
 V_n(x)=K_n(-\log_Ex),\ \ \ K_n(z)=\sum_{m=-\iy}^{+\iy}\frac{\phi_n\k_{m}^{\ast n}e^{2\pi\mathbf{i}mz}}{\Gamma(-\frac{2\pi\mathbf{i}m+n\ln p_1}{\ln E})},\ \ \k_{m}^{\ast n}=\int_0^1K(x)^ne^{-2\pi\mathbf{i}mx}dx,
\]
where
\[\lb{007}
 K(z)=\Phi(\Pi(E^z))p^{-z},\ \ \Phi(z)=\lim_{t\to\iy}p^{-t}\underbrace{P\circ...\circ P}_{t}(z),\ \ \Phi^{-1}(z)=\phi_1z+\phi_2z^2+\phi_3z^3+....
\]
The function $\Phi(z)$ satisfies the Schr\"oder functional equation
\[\lb{008}
 \Phi(P(z))=p\Phi(z),\ \ \ \Phi(0)=0,\ \ \ \Phi'(0)=1.
\]
It can be extended to the open connected component $\mJ_P(0)$ of the filled Julia set containing the unit disc $\mathbb{B}_1$ and having a unique attracting point $z=0$.  Schr\"oder-type functions are well known. Their study begins with work \cite{S1870} and continues in works \cite{K1884}, \cite{P1890}, and \cite{F1}. An excellent exposition of many ideas in holomorphic dynamics is given in a nice book \cite{M}. For convenience, we include some related results with proofs in this manuscript, see Theorem \ref{T4}. The inverse function satisfies a Poincar\'e-type functional equation
\[\lb{008a}
 \Phi^{-1}(pz)=P(\Phi^{-1}(z)),\ \ \ \Phi^{-1}(0)=0,\ \ \ (\Phi^{-1})'(0)=1.
\]
Differentiating \er{008a} and using expansion \er{001}, we can find $\phi_j$ in \er{007} explicitly, step by step
\[\lb{008b}
 \phi_1=1,\ \ \ \phi_2=\frac{p_2}{p^2-p},\ \ \ \phi_3=\frac{2p_2\phi_2+p_3}{p^3-p},\ ....
\] 
One-periodic, as it is seen from \er{007} and functional equations \er{004} and \er{008}, Karlin-McGregor function $K(z)$ was introduced in \cite{KM1} and \cite{KM2}. Equation \er{005} is effective for calculating the density $w(x)$. In \cite{K24}, it is shown that the double series, upon substituting \er{006} into \er{005}, converges superexponentially fast in $n$ and exponentially fast in $m$. However, for the convergence in $m$, the condition on the Julia set critical angle $\vt$ was required. This angle is half of the maximal angle for which there is a sector of arbitrarily small radius and with vertex at $1$ lying entirely in $\mJ_P(0)$. The complement to the doubled critical angle is illustrated in Fig. \ref{fig0}, taken from my article \cite{K}.
\begin{figure}[h]
	\center{\includegraphics[width=0.75\linewidth]{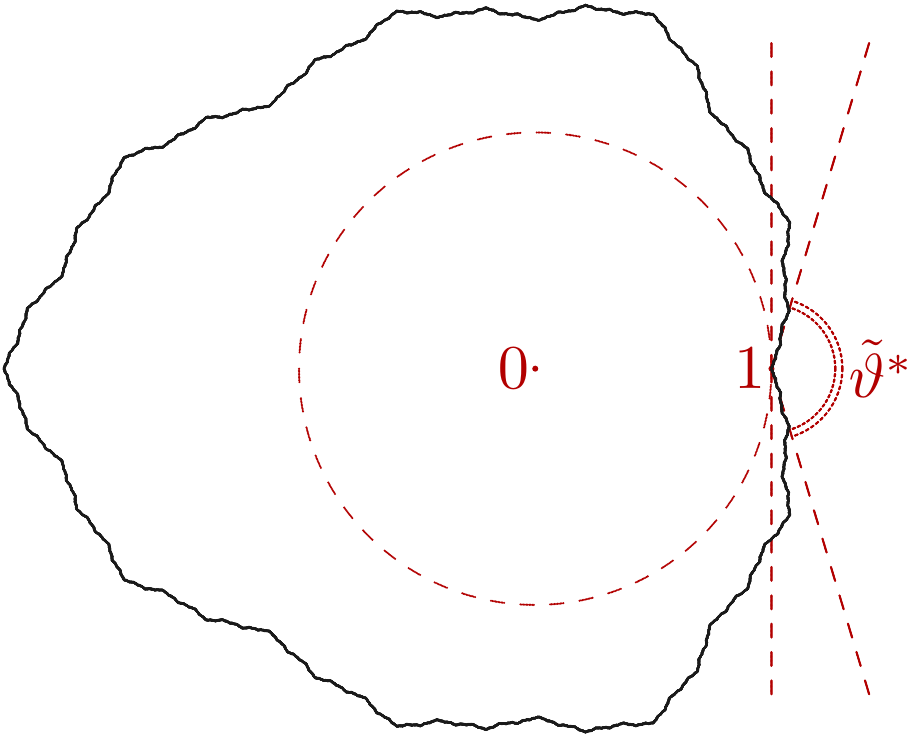}}
	\caption{Julia set for the polynomial $P(z)=0.1z+0.5z^2+0.4z^3$. It is the boundary of the open filled component $\mJ_P(0)$, which is the domain of definition for the analytic function $\Phi$, see \er{007} and \er{008}. The unit disc belongs to the filled Julia set. The critical angle is $\vt=\pi-\tilde\vt^*/2$.}\lb{fig0}
\end{figure}
\begin{theorem}\lb{T24}{\rm [from \cite{K24}, 2024]}
If $\log_Ep<-1$ and $\vt>\pi/2$ then \er{005} is true.
\end{theorem}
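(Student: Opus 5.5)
Starting from the Fourier representation \er{003}, the plan is to express $\Pi$ through the Schr\"oder function and expand in powers of $\Phi$. Iterating \er{004} gives $\Pi(z)=P_t(\Pi(z/E^t))$. Combining this with \er{008a} and the definition \er{007} of $K$, we obtain for $\Re z>0$ (and, by continuation, in a sector around the positive axis)
$$
\Pi(z)=\Phi^{-1}\bigl(p^{\log_E z}K(\log_E z)\bigr)=\sum_{n\ge1}\phi_n z^{n\log_E p}K(\log_E z)^n .
$$
This holds as long as $p^{\log_E z}K(\log_E z)$ stays in the disc of convergence of $\Phi^{-1}$, which is true for $|z|$ large. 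Each term is a power $z^{-n\b}$ times a one-periodic function of $\log_E z$. Writing $K(s)^n=\sum_m\k_m^{\ast n}e^{2\pi\mathbf{i}ms}$, we get
$$
\Pi(z)\sim\sum_{n,m}\phi_n\k_m^{\ast n}\,z^{-n\b+2\pi\mathbf{i}m/\ln E}.
$$
The inverse Fourier/Laplace transform of $z^{-s}$ is $x^{s-1}/\Gamma(s)$, with $s=n\b-2\pi\mathbf{i}m/\ln E=-(2\pi\mathbf{i}m+n\ln p)/\ln E$. Substituting gives exactly $x^{\a+(n-1)\b}$ times the coefficient in \er{006}, since $\a=\b-1$. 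So the formal identity \er{005} is clear, and the work lies in justifying the termwise inversion.

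The justification will go through a contour deformation. First, rewrite $w(x)=\frac1{2\pi\mathbf{i}}\int_{\mathbf{i}\R}\Pi(z)e^{zx}dz$. Next, rotate the contour into the left half-plane into two rays $\arg z=\pm(\pi/2+\d)$. This needs $\Pi$ to be analytic and bounded in the sector $|\arg z|<\pi/2+\d$. Here the condition $\vt>\pi/2$ enters. Near $z=\iy$, $\Pi(z)=P_t(\Pi(z/E^t))$ with $\Pi(z/E^t)\approx 1-z/E^t$, so $\Pi$ maps the sector $|\arg z|<\vt$ into $\mJ_P(0)$. There $\Phi$ is defined and $K(\log_E z)$ extends analytically to a strip $|\Im s|<\vt/\ln E$ (up to $\ve$), with $K$ bounded on closed substrips. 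Consequently $\Pi(z)=\Phi^{-1}(p^{\log_E z}K(\log_E z))$ decays like $|z|^{-\b}$ in the sector. Since $\log_Ep<-1$ means $\b>1$, the integrand is absolutely integrable on the rotated rays.

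On the rotated rays the factor $e^{zx}$ decays exponentially for $x>0$. We then split $\Pi=\sum_n$ as above, which converges uniformly for $|z|\ge R$ because $|p^{\log_E z}|=|z|^{-\b}$ is small. The compact piece $|z|<R$ is handled by closing the contour around $0$: each power term is integrated over a Hankel-type contour, giving the $1/\Gamma$ factor exactly. For the remaining piece, $\Pi$ minus the finite sum, the contribution is analytic in the region and is absorbed by showing the series in $n$ converges.

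The main obstacle is the double-series convergence in $m$. The Hankel integral for $z^{-s}$ along rays at angle $\pi/2+\d$ produces $1/\Gamma(s)$, and $|1/\Gamma(s)|$ grows like $e^{\pi|\Im s|/2}$. This growth must be beaten by the Fourier decay $|\k_m^{\ast n}|\lesssim C^n e^{-2\pi|m|\theta/\ln E}$ for any $\theta<\vt$, which comes from analyticity of $K^n$ in the strip. Since $\Im s=2\pi m/\ln E$, convergence requires $\theta>\pi/2$, which is possible precisely when $\vt>\pi/2$. Summation in $n$ is then controlled by $|\phi_n|C^n|\Gamma(n\b)|^{-1}$, which decays super-exponentially; the radius of convergence of $\Phi^{-1}$ gives $|\phi_n|\le r^{-n}$. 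Finally, Fubini exchanges sums and integral, yielding \er{005} for every $x>0$.
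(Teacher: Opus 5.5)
Your proof is correct in substance and follows the paper's skeleton: the large-$|z|$ expansion $\Pi(z)=\sum_n\phi_n z^{n\log_Ep}K(\log_Ez)^n$ in a sector, Fourier decay of $\k_m^{*n}$ from analyticity of $K$ in a strip, Hankel's formula for each power, and Stirling to play the growth $e^{\pi^2|m|/\ln E}$ of $1/\G$ against the decay $e^{-2\pi\theta|m|/\ln E}$. What differs is the direction of the contour deformation. The paper proves Theorem \ref{mainT}, which gives Theorem \ref{T24} via Corollary \ref{mainC}. It moves $\mathbf{i}\R$ to the \emph{right}, to $a+\mathbf{i}\R$, with $a$ so large that the whole line lies where the $n$-series converges. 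There $|e^{zx}|=e^{ax}$ gives no damping. So integrability of each term rests on $n\log_Ep<-1$, which is exactly the hypothesis $\log_Ep<-1$ (replaced by integration by parts in general). The hypothesis $\vt>\pi/2$ is used to make the Fourier series of $K^n$ converge uniformly along the line, where $|\arg z|\to\pi/2$. You instead rotate into the left half-plane onto a Hankel-type contour $\mathcal{C}$: rays $\arg z=\pm(\pi/2+\d)$ joined by the arc $|z|=R$ through $z=R$. This spends $\vt>\pi/2$ at the outset, since $\Pi$ must be analytic and $K$ bounded on a sector (strip) strictly wider than the half-plane. In exchange, $e^{zx}$ decays exponentially on $\mathcal{C}$, and all interchanges in $n$ and $m$ follow from one Fubini bound, $\sum_{n,m}|\phi_n||\k_m^{*n}|\int_{\mathcal{C}}|z|^{-n\b}e^{2\pi|m|(\pi/2+\d)/\ln E}e^{x\Re z}|dz|<\iy$, valid for $\pi/2+\d<\theta<\vt$ and $R$ large. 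The hypothesis $\log_Ep<-1$ is then needed only for the original integral over $\mathbf{i}\R$ and the arcs at infinity, where Jordan's lemma would suffice anyway. So your route makes the paper's integration-by-parts step unnecessary.

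Two steps need repair, neither serious. First, $\Pi=\Phi^{-1}(p^{\log_Ez}K(\log_Ez))$ does not follow merely from $p^{\log_Ez}K(\log_Ez)$ lying in the disc of convergence of $\Phi^{-1}$. The map $\Phi$ is not injective on $\mJ_P(0)$: every point sent to $0$ by an iterate of $P$ is a zero of $\Phi$. So you need $\Pi(z)\to0$ uniformly in closed subsectors beforehand. This follows from $\Pi(z)=P_n(\Pi(z/E^n))$ with $z/E^n$ in a fixed compact annular sector; its image under $\Pi$ is a compact subset of $\mJ_P(0)$, on which the iterates of $P$ converge uniformly to $0$ (cf.\ Theorem \ref{T3}). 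As written, your second paragraph derives the $|z|^{-\b}$ decay from the very representation that presupposes it. Second, the sentence calling ``$\Pi$ minus the finite sum'' analytic is incorrect, since the partial sums are singular at $0$. It is also superfluous: once the inner parts of the rays are replaced by the arc $|z|=R$, the full $n$-series converges uniformly on $\mathcal{C}$ and there is no remainder to treat.
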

The condition $\log_Ep<-1$ can be removed with one trick based on integration by parts. The second condition needs to be addressed differently. Since $\mathbb{B}_1\ss\mJ_P(0)$, we have $\vt\ge\pi/2$. But, to compensate for small values of $\G$-function in the denominator, see \er{006}, we need  $\vt>\pi/2$. In the current manuscript, it is shown that the critical angle $\vt$ is always strictly greater than $\pi/2$ (see Corollary \ref{C2}). The only assumption is $0<p<1$, which is assumed at the beginning. We will formulate a strengthened version of Theorem \ref{T24} in a slightly expanded form, indicating the rate of convergence of the double series.
\begin{theorem}\lb{mainT} The following expansion holds
\[\lb{009}
w(x)=x^{-1}\sum_{n=1}^{+\iy}\sum_{m=-\iy}^{+\iy}\frac{\phi_n\k_{m}^{\ast n}x^{-\frac{n\ln p+2\pi\mathbf{i}m}{\ln E}}}{\Gamma(-\frac{2\pi\mathbf{i}m+n\ln p_1}{\ln E})}
\]
and
\[\lb{010}
 \lt|\frac{\phi_n\k_m^{*n}x^{-\frac{n\ln p+2\pi\mathbf{i}m}{\ln E}}}{\G(-\frac{n\ln p+2\pi\mathbf{i}m}{\ln E})}\rt|\le Ae^{({\log_Ep}+\ve)n\ln n+\frac{2\pi(\frac{\pi}2-\vt+\ve)}{\ln E}|m|},
\]
where, for any fixed $\ve>0$, the value $A=A(\ve,x)>0$ is uniformly bounded when $x$ belongs to any compact subset of $(0,+\iy)$. 
\end{theorem}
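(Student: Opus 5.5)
The plan is to follow the scheme of \cite{K24}: substitute $\Pi(z)=\Phi^{-1}(K(\log_E z)\,p^{\log_E z})$, obtained from \er{007}, into the Fourier integral \er{003}. Then I rotate the contour and expand term by term. Finally I bound each term separately, which gives the estimate \er{010}.

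\textbf{Series representation of $\Pi$.} For $z$ in a sector $|\arg z|<\vt$ with $|z|$ large, $\Pi(z)$ lies close to $0$, inside $\mJ_P(0)$. There $\Phi^{-1}(u)=\sum_n\phi_n u^n$ converges, since $\Phi^{-1}$ is analytic near $0$ and $|\phi_n|\le C R^{-n}$. Because $K$ is one-periodic and analytic in a strip, I write $K(\zeta)^n=\sum_m \k_m^{*n}e^{2\pi\mathbf{i}m\zeta}$, which gives
$$\Pi(z)=\sum_{n\ge1}\sum_m \phi_n\k_m^{*n} z^{\frac{n\ln p+2\pi\mathbf{i}m}{\ln E}}.$$
The Fourier coefficients decay like $|\k_m^{*n}|\le C_\ve^n e^{-2\pi(\vt-\ve)|m|/\ln E}$. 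To see this, shift the integration line in $\k_m^{*n}$ to $\Im\zeta=\pm(\vt-\ve)/\ln E$. On that line $E^\zeta$ has argument $\pm(\vt-\ve)$, so $\Pi(E^\zeta)$ stays in a compact subset of $\mJ_P(0)$ for $\Re\zeta$ in a period, after shifting by the functional equation. This uses exactly the definition of the critical angle together with Theorem~\ref{T3}. Corollary~\ref{C2} gives $\vt>\pi/2$, and this strict inequality is the key new input.

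\textbf{Inverse Fourier transform of the powers.} Next I rotate the contour $\mathbf{i}\R$ in \er{003} into two rays $\arg z=\pm(\pi/2+\delta)$ with $\pi/2<\pi/2+\delta<\vt$. I then use the Hankel-type formula
$$\frac1{2\pi\mathbf{i}}\int z^{s}e^{zx}\,dz=\frac{x^{-s-1}}{\G(-s)}, \qquad s=\frac{n\ln p+2\pi\mathbf{i}m}{\ln E}.$$
This reproduces each term of \er{009}. There are two issues:
\begin{itemize}
\item Near $z=0$ the series is not valid. I handle this by deforming the contour around a small loop, where the Hankel representation holds for every $s$.
\item For $\Re s>-1$ (the case $\log_E p\ge -1$) the integrals diverge at infinity. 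Following the hint after Theorem~\ref{T24}, I first integrate by parts $k$ times. This replaces $\Pi$ by $\Pi^{(k)}$ and multiplies by $x^{-k}$; the powers then gain decay $z^{s-k}$, and I undo the step afterwards termwise.
\end{itemize}
Exchanging the sum and the integral is justified by the bounds below, which give absolute convergence locally uniformly in $x$.

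\textbf{Estimate \er{010}.} Put $s=\frac{n\ln p+2\pi\mathbf{i}m}{\ln E}$. By Stirling's formula,
$$|1/\G(-s)|\le C\,e^{\pi|\Im s|/2}\,|s|^{\Re s+1/2}e^{-\Re s\ln|s|}\cdots,$$
which gives two contributions:
\begin{itemize}
\item In $m$: the factor $e^{\pi^2|m|/\ln E}$. Combined with the decay of $\k_m^{*n}$, this yields $e^{2\pi(\pi/2-\vt+\ve)|m|/\ln E}$.
\item In $n$: the factor $1/\G(n\b)\approx e^{-\b n\ln n(1+o(1))}=e^{(\log_Ep+\ve)n\ln n}$, where $\b=-\log_E p$. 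This absorbs the geometric factors $C_\ve^n R^{-n}$ and $x^{-\Re s}$, which is harmless for $x$ in a compact subset.
\end{itemize}
The mixed regime, where both $n$ and $|m|$ are large, needs care: one has to bound $|\G(-s)|^{-1}$ uniformly via $\ln|s|\le\ln n+\ln(1+|m|)$. The extra $n\ln(1+|m|)$ is absorbed by taking the strip width slightly smaller, or by using $n\ln|m|\le \ve|m|+n\ln(n/\ve)$.

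\textbf{Main obstacle.} I expect the hardest step to be the uniform bound $|\k_m^{*n}|\le C_\ve^n e^{-2\pi(\vt-\ve)|m|/\ln E}$. It requires that $|\Phi(\Pi(E^\zeta))|$ stays bounded on the full shifted line near the critical angle. This is where the geometry of $\mJ_P(0)$ near $1$ enters, and where Corollary~\ref{C2} is essential.
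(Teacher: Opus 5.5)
Your handling of the origin fails as written. The expansion $\Pi(z)=\sum_n\phi_n\bigl(z^{\log_Ep}K(\log_Ez)\bigr)^n$ is the Taylor series of $\Phi^{-1}$ evaluated at $u=\Phi(\Pi(z))$. It therefore holds only where $|u|$ is below the radius of convergence of $\Phi^{-1}$, i.e.\ for $|z|\ge R_2$ with $R_2$ \emph{large}. On a circle $|z|=r$ with $r\to0$, $|u|\sim r^{\log_Ep}|K|$ blows up and the series diverges. A small Hankel loop makes matters worse; the validity of the Hankel formula for every $s$ is not the issue.

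The repair is to use a \emph{large} loop. $\Pi$ is analytic on the closed sector $|\arg z|\le\pi/2+\delta$ together with a neighbourhood of $0$ (Theorem~\ref{T1}, Corollary~\ref{C1}). So Cauchy's theorem lets you replace the part of your contour inside $|z|<R_2$ by the arc $\{|z|=R_2,\ |\arg z|\le\pi/2+\delta\}$. On the resulting contour:
\begin{itemize}
\item the double series converges absolutely and uniformly, since $\pi/2+\delta<\vartheta-\varepsilon$ and $R_2$ is large;
\item $|e^{zx}|=e^{-|z|x\sin\delta}$ on the rays;
\item $\frac1{2\pi\mathbf{i}}\int z^se^{zx}\,dz=x^{-s-1}/\Gamma(-s)$ still holds for every $s$, by deformation to the standard Hankel contour.
\end{itemize}
With this fix your route genuinely differs from the paper's and is arguably cleaner. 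The paper keeps the series valid by shifting $\mathbf{i}\R$ to $a+\mathbf{i}\R$ with $a$ large. There $|e^{zx}|=e^{ax}$ gives no decay, so it must integrate by parts for the finitely many $n$ with $n\log_Ep\ge-1$. On your rotated contour every integral converges absolutely, and Fubini justifies both interchanges at once. Your integration-by-parts bullet is therefore superfluous, and its claim that the integrals diverge at infinity is false on your contour.

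Your Stirling step is garbled, and the mixed-regime fix you propose would break \er{010}. With $\beta=-\log_Ep$ and $-s=n\beta-2\pi\mathbf{i}m/\ln E$, the correct bound is $\ln|1/\Gamma(-s)|\le-(n\beta-\tfrac12)\ln|s|+n\beta+\pi^2|m|/\ln E+C$. Once $n\beta>\frac12$ the coefficient of $\ln|s|$ is negative, so the trivial bound $|s|\ge n\beta$ handles all $m$ simultaneously. The finitely many $n$ with $n\beta\le\frac12$ contribute at most $|s|^{1/2}$, which is absorbed by $e^{\varepsilon|m|}$. So there is no mixed-regime difficulty. Conversely, absorbing a term $+n\ln(1+|m|)$ via $n\ln|m|\le\varepsilon|m|+n\ln(n/\varepsilon)$ would add $+n\ln n$ to the exponent and destroy the rate $(\log_Ep+\varepsilon)n\ln n$.

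Finally, your ``main obstacle'' is routine. By Corollary~\ref{C1} with $\varphi_0=\vartheta-\varepsilon/2$, $K$ is analytic on $|\Im\zeta|<(\vartheta-\varepsilon/2)/\ln E$. By periodicity and compactness it is bounded by some $K_0$ on $|\Im\zeta|\le(\vartheta-\varepsilon)/\ln E$. Shifting the integration line then gives $|\kappa_m^{*n}|\le K_0^ne^{-2\pi(\vartheta-\varepsilon)|m|/\ln E}$, as in the paper.
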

As follows from the proof, the exponential factor in \er{010} cannot be significantly improved. The series \er{009} converges absolutely. The convergence is exponential in $m$ and superexponential in $n$. We can change the order of summation by Riemann’s rearrangement theorem and obtain the direct strengthening of Theorem \ref{T24}.
\begin{corollary}\lb{mainC}
Identity \er{005} is true.
\end{corollary}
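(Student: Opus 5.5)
Corollary \ref{mainC} follows from Theorem \ref{mainT} by a reordering argument. The plan is to show that the double series \er{009} converges absolutely and locally uniformly in $x$, and then regroup it by $n$ to obtain \er{005}.

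\textbf{Step 1 (absolute summability).} Fix $\ve>0$ so small that $\log_Ep+\ve<0$ and $\frac{\pi}{2}-\vt+\ve<0$. The first inequality is possible because $0<p<1<E$. The second is possible because $\vt>\pi/2$ by Corollary \ref{C2}, which is the only place where the geometric input enters. Then \er{010} bounds the $(n,m)$ term by $A\,a_n b_m$, where
$$a_n=e^{(\log_Ep+\ve)n\ln n},\qquad b_m=e^{-c|m|},\qquad c=\frac{2\pi(\vt-\frac{\pi}{2}-\ve)}{\ln E}>0.$$
Both $\sum_n a_n$ (superexponentially decaying) and $\sum_m b_m$ (geometric) are finite. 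Hence
$$\sum_{n\ge1}\sum_{m\in\Z}\bigl|\cdot\bigr|\le A\Bigl(\sum_n a_n\Bigr)\Bigl(\sum_m b_m\Bigr)<\iy,$$
uniformly for $x$ in compact subsets of $(0,+\iy)$, since $A(\ve,x)$ is uniformly bounded there.

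\textbf{Step 2 (regrouping).} Absolutely convergent double series may be summed in any order: this is Riemann's rearrangement theorem for absolutely convergent series, or equivalently Fubini for counting measure. So \er{009} equals $\sum_{n\ge1}S_n(x)$, where
$$S_n(x)=x^{-1}\sum_{m\in\Z}\frac{\phi_n\k_m^{*n}x^{-\frac{n\ln p+2\pi\mathbf{i}m}{\ln E}}}{\G(\cdots)}.$$

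\textbf{Step 3 (identifying $S_n$).} Factor $x^{-1}x^{-n\ln p/\ln E}=x^{-1+n\b}$, where $\b=-\log_Ep$. Since $\a=-1+\b$, this equals $x^{\a+(n-1)\b}$. The remaining factor satisfies
$$x^{-2\pi\mathbf{i}m/\ln E}=e^{2\pi\mathbf{i}m(-\log_Ex)},$$
so the inner $m$-sum is exactly $K_n(-\log_Ex)=V_n(x)$ as defined in \er{006}. This uses that $p_1=p$, which makes the $\G$-arguments in \er{006}, \er{009} and \er{010} coincide. The $m$-series defining $K_n$ converges absolutely by Step 1, so $V_n$ is well defined. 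Therefore
$$S_n(x)=x^{\a+(n-1)\b}V_n(x),$$
and summing over $n$ gives \er{005} for every $x>0$.

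I expect no genuine obstacle here, since all the analytic work sits in Theorem \ref{mainT} and Corollary \ref{C2}. The one point needing care is choosing $\ve$ small enough that both exponents in \er{010} are negative. This is exactly where the strict inequality $\vt>\pi/2$ from Corollary \ref{C2} is indispensable: with $\vt=\pi/2$ the $m$-sum would not be controlled.
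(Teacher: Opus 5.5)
Your proposal is correct and follows the paper's own argument. The bound \er{010} makes the double series absolutely convergent, once $\ve$ is chosen small using $\vt>\pi/2$ from Corollary \ref{C2}. The summation can then be regrouped by $n$, and using $p_1=p$ the inner $m$-sums are exactly $x^{\a+(n-1)\b}V_n(x)$. Your Step 3 only makes explicit the bookkeeping the paper leaves implicit, namely $x^{-1+n\b}=x^{\a+(n-1)\b}$ and $x^{-2\pi\mathbf{i}m/\ln E}=e^{2\pi\mathbf{i}m(-\log_Ex)}$.
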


{\bf Remark 1.} Let us discuss briefly when the critical angle $\vt$ can be minimal. This discussion is not rigorous and should be taken with a grain of salt. However, it is a useful practical illustration. If $P$ is a polynomial with $\deg P=d$ then the B\"otcher function is defined as
\[\lb{b001}
 B(z)=\underbrace{P\circ...\circ P}_{t}(z)^{\frac1{d^t}},\ \ \ z\in\C\sm\mJ_P.
\]
It satisfies the functional equation
\[\lb{b002}
 B(P(z))=B(z)^d,\ \ \ B(z)=p_d^{\frac1{d-1}}z+O(1),\ z\to\iy.
\]
By analogy with the Karlin-McGregor function, one may define another one-periodic function
\[\lb{b003}
 L(z):=d^{-z}\ln B(\Pi(-E^z)),
\]
which, for $z$ in sufficiently small neighborhood of $1$, gives
\[\lb{b004}
 B(z)=\exp(L(\log_E(-\Pi^{-1}(z)))(-\Pi^{-1}(z))^{\log_Ed}),
\]
which, by \er{004}, leads to
\[\lb{b005}
 B(1+z)=1+L(\log_Ez)z^{\log_Ed}+o(1),\ \ \ z\to0.
\]
Definition \er{b001} means that $B(\pa\mJ_P)\ss\mathbb{S}_1=\{z:\ |z|=1\}$, since $\pa\mJ_P$ is bounded for polynomials and invariant under action of $P$. Thus, excluding the effect of the log-periodic factor $L(\log_Ez)$ in \er{b005}, we may see that the mapping $z^{\log_Ed}$ expands, in the first approximation, the complement of $\vt$ to $\pi/2$. Hence,
\[\lb{b006}
 \vt\approx\pi-\frac{\pi}{2\log_Ed},
\]
and the minimal $d$ minimizes the angle $\vt$, in the first approximation. For fixed $p$ and $E$, the minimal $d$ is reached for the optimal $P(z)$ given in Corollary \ref{C0}. This is a $2$ or $3$ point probability generating function, which also minimizes the angle of deviation of the imaginary axis under the action of the mapping $\Pi(z)$, see Lemma \ref{L2} and proof of Theorem \ref{T3}. This deviation is the main detail on which the proof that $\vt>\pi/2$ is based. Thus, from both points of view, the optimal polynomial gives a small critical angle. We have no proof that it gives the smallest angle. A comparison of two critical angles, one of which is empirically minimal, is illustrated in Fig. \ref{fig1}.
\begin{figure}[h]
    \centering
    \begin{subfigure}[b]{0.55\textwidth}
        \includegraphics[width=\textwidth]{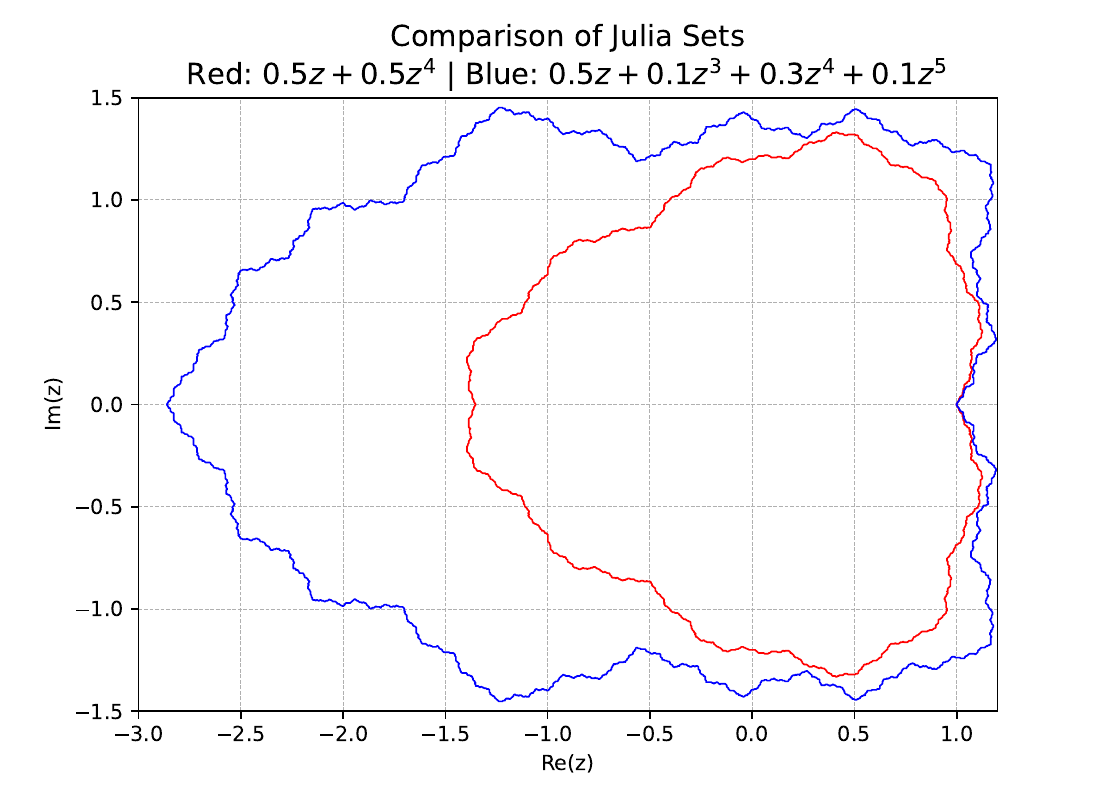}
        \caption{}
    \end{subfigure}
    \begin{subfigure}[b]{0.43\textwidth}
        \includegraphics[width=\textwidth]{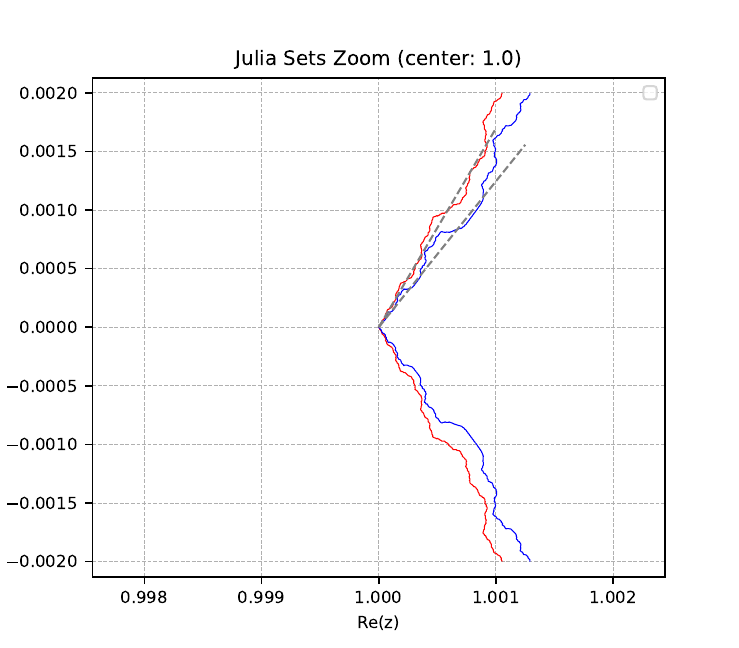}
        \caption{}
    \end{subfigure}
    \caption{Comparison of the $0$-components of Julia sets for two polynomials having the same $p=0.5$ and $E=2.5$, where (a) provides a general overview, and (b) gives a zoom at $z=1$. Dashed lines in (b) correspond to the RHS of \er{b006}.}\lb{fig1}
\end{figure}
The factor related to $|m|$ in the estimate \er{010} shows that smaller critical angles can lead to larger density oscillations. This statement is not rigorous. Many factors must be taken into account, especially the value of $A$. For a particular example of polynomials considered in Fig. \ref{fig1}, the non-rigorous statement about oscillations is true, see Fig. \ref{fig2}. The corresponding densities without normalization are compared in Fig. \ref{fig2a}.
\begin{figure}[h]
	\center{\includegraphics[width=0.95\linewidth]{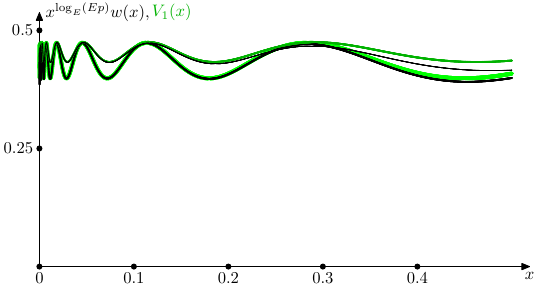}}
	\caption{For the probability generating functions taken from Fig. \ref{fig1}, the first oscillatory term is compared with the normalized density. Larger oscillations correspond to the empirically optimal polynomial $P(z)=(z+z^4)/2$.}\lb{fig2}
\end{figure}

\begin{figure}[h]
	\center{\includegraphics[width=0.95\linewidth]{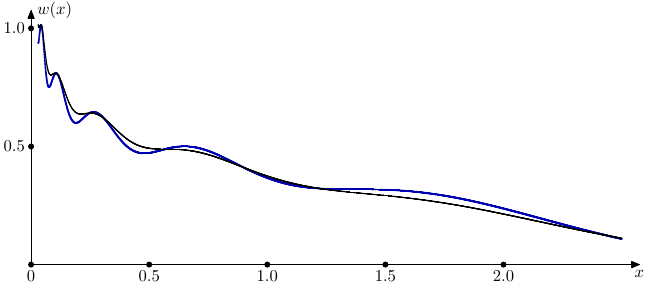}}
	\caption{For the probability generating functions taken from Fig. \ref{fig1}, the densities are compared. Blue curve corresponds to the polynomial $P(z)=(z+z^4)/2$.}\lb{fig2a}
\end{figure}

It's worth noting that oscillations in the limiting characteristics of various types of branching processes have received considerable attention in physical and biological applications, see, e.g., \cite{DIL,DMZ,CG}, as well as in combinatorics, see \cite{Kuz}, \cite{FO}, and \cite{O}. Applied studies are, of course, extremely interesting from different angles of a purely mathematical perspective.

{\bf Remark 2.} The exact calculation of the critical angle $\vt$ can be an interesting problem in its own right. Let us consider an example, partially motivated by \cite{K24a}, with the Poisson-type probability generating function $P(z)$. This function and its inverse are defined by 
\[\lb{b007}
 P(z)=\frac{e^z-1}{e-1},\ \ \ P^{-1}(z)=\ln(1+(e-1)z).
\]
Its Julia set looks like a union of curves, see Fig. \ref{fig3}.(a). It is called the Cantor bouquet. In particular, curves 
\[\lb{b007a}
(P^{-1}+2\pi\mathbf{i}n_t)\circ...\circ (P^{-1}+2\pi\mathbf{i}n_1)([1,+\iy)+2\pi\mathbf{i}n_0), \ \ \ t\in\N\cup\{0\},\ \ \ n_j\in\Z, 
\]
belong to the complement of the open filled Julia set. It seems, see Fig. \ref{fig3}.(b), that the points
\[\lb{b008}
 c_t=\underbrace{P^{-1}\circ...\circ P^{-1}}_{t}(1-2\pi\mathbf{i}),\ \ \ t\in\N,
\]
are the vertices of the piecewise linear envelope of the Julia set in the neighborhood of $z=1$. This observation, in the limit, determines the critical angle
\[\lb{b009}
 \vt=\pi+\lim_{t\to+\iy}\arg (c_t-1).
\]
We have no rigorous proof of \er{b009}. However, if it is true, then we can express $\vt$ through functions already defined above
\[\lb{b010}
 \vt=\pi+\lim_{t\to+\iy}\arg (c_t-1)=\vt=\pi+\lim_{t\to+\iy}\arg E^t(c_t-1)=\pi+\lim_{t\to+\iy}\Im\ln E^t(c_t-1),
\]
or
\[\lb{b011}
 \vt=\pi+\Im\ln(-\Pi^{-1}(1-2\pi\mathbf{i})),
\]
where 
\[\lb{b011a}
 \Pi^{-1}(z)=\lim_{t\to+\iy}E^t(1-\underbrace{P^{-1}\circ...\circ P^{-1}}_{t}(z))
\]
coincides with the inverse of $\Pi$ defined in \er{003}, at least in some neighborhood of $z=1$. It satisfies the Schr\"oder functional equation
\[\lb{b012}
 \Pi^{-1}(P^{-1}(z))=E^{-1}\Pi^{-1}(z).
\]
While we use \er{b011a} directly in our computations, we can also use \er{b012} many times to avoid the questions about the existence of $\Pi^{-1}$ at $z=1-2\pi\mathbf{i}$ in \er{b011}. For example, we can write
\[\lb{b013}
 \vt=\pi+\Im\ln(-\Pi^{-1}(P^{-1}(1-2\pi\mathbf{i}))),
\]
where
\[\lb{b014}
 P^{-1}(1-2\pi\mathbf{i})=\ln(e-2\pi\mathbf{i}(e-1))=1+\ln(1-2\pi\mathbf{i}(1-e^{-1}))
\]
lies about four times closer to $1$.  In general, the strategy for finding the critical angle can be as follows. We construct the Julia set in the neighborhood of $1$ and geometrically determine the points with the smallest argument and a negative imaginary part. These are usually the images of some special points under the action of $P^{-1}$, which can be explicitly computed. Then we may try to prove the corresponding analytical result if we don’t mind the time and energy. 

\begin{figure}[h]
	\centering
	\begin{subfigure}[b]{0.5\textwidth}
		\includegraphics[width=\textwidth]{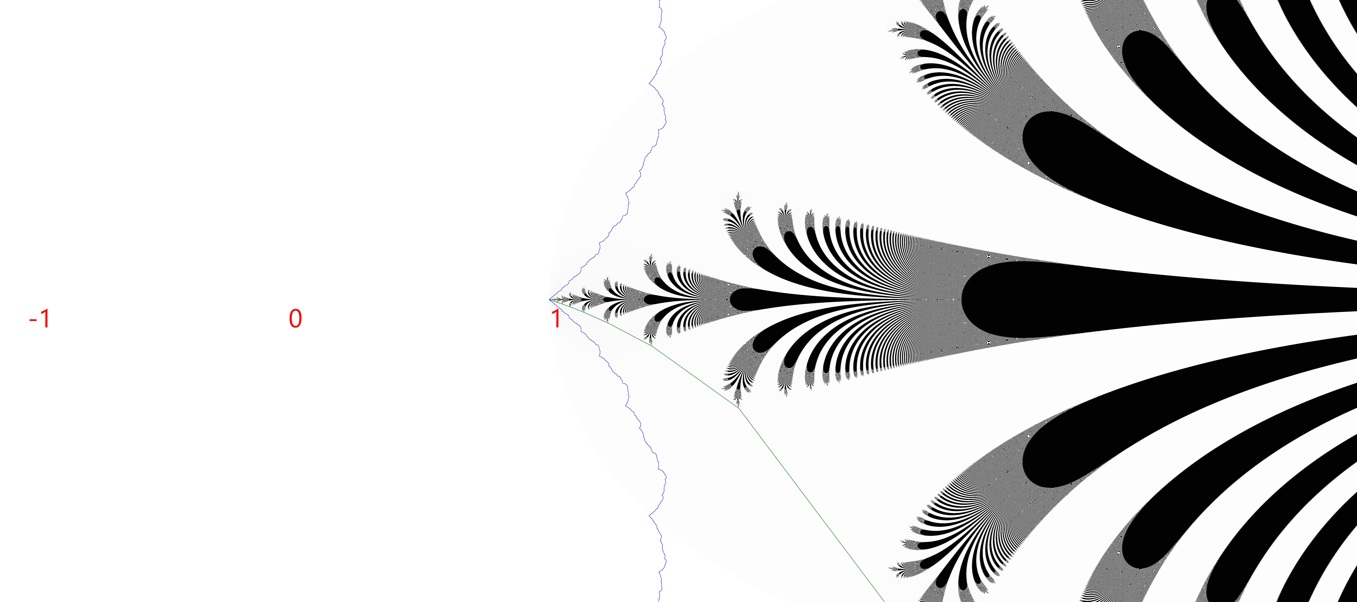}
		\caption{Julia set for \er{b007} (black) and \er{b015} (blue)}
	\end{subfigure}
	\begin{subfigure}[b]{0.49\textwidth}
		\includegraphics[width=\textwidth]{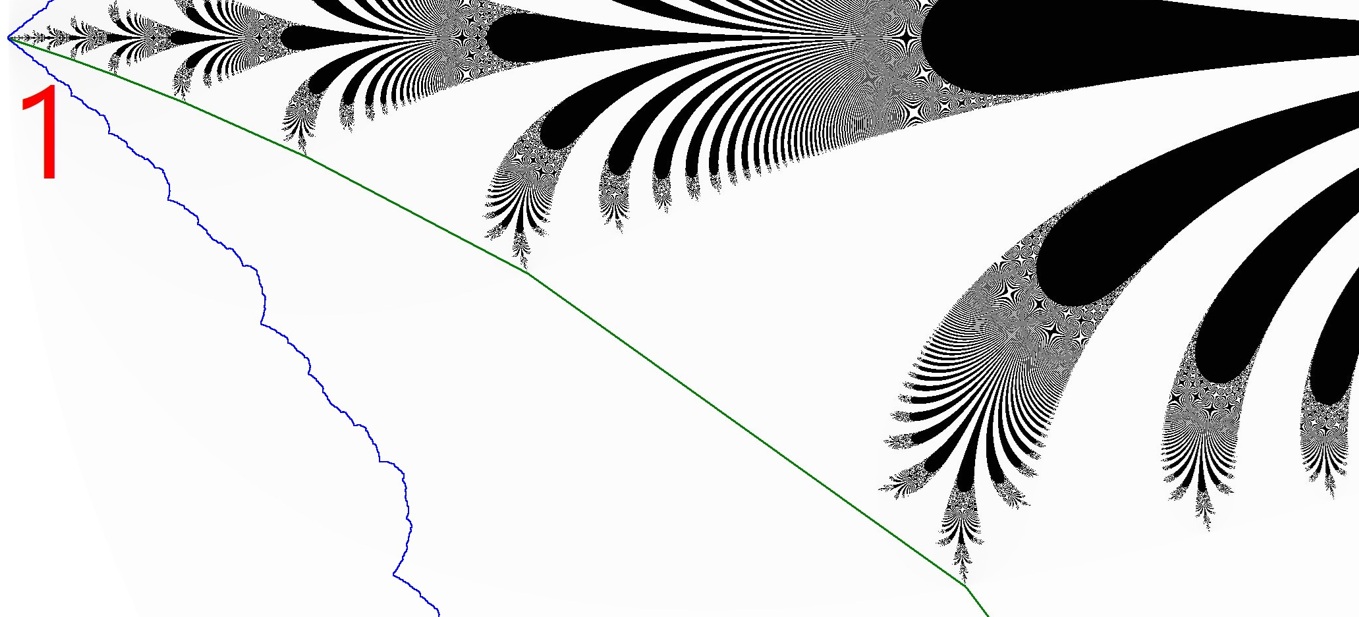}
		\caption{zoomed at $z=1$}
	\end{subfigure}
	\caption{Green line segments connect points $c_t$, see \er{b008}. It seems that, near $z=1$, the Julia set lies above the union of the line segments.}\lb{fig3}. 
\end{figure}

The optimal probability generating function in the class with $p=(e-1)^{-1}$ and $E=e(e-1)^{-1}$ (to which \er{b007} belongs) with empirically small(est?) critical angle is
\[\lb{b015}
 P(z)=\frac{z+(2e-5)z^2+(3-e)z^3}{e-1}.
\]
Indeed, it has a smaller angle, as seen in Fig. \ref{fig3}. By analogy with \er{b008}, one may also verify that the critical angle for \er{b015} corresponds to some preimages of $z=1$, i.e. to
\[\lb{b015a}
 c_t=\underbrace{P^{-1}\circ...\circ P^{-1}}_{t}(1).
\]
Each $P^{-1}$ has three values, and only one of them should be chosen to determine the critical angle in the limit. This study is beyond the scope of this article, but may be useful as a stand-alone exercise for interested readers.

Both critical angles for \er{b007} and \er{b015} are quite large, and the density oscillations are almost invisible (see Fig. \ref{fig4}). The magnitude of oscillations is smaller than $10^{-6}$. Hence, both $V_1(x)$ and $V_2(x)$ are almost constants in this case. Densities themselves are plotted in Fig. \ref{fig5}.

\begin{figure}[h]
	\center{\includegraphics[width=0.95\linewidth]{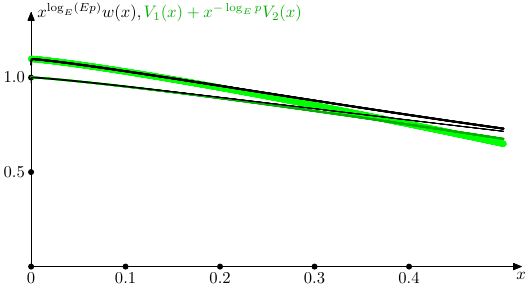}}
	\caption{Normalized densities are compared with the first two normalized terms taken from \er{005}. Larger bold curves correspond to \er{b007}, and smaller thin curves to \er{b015}. }\lb{fig4}
\end{figure}

\begin{figure}[h]
	\center{\includegraphics[width=0.95\linewidth]{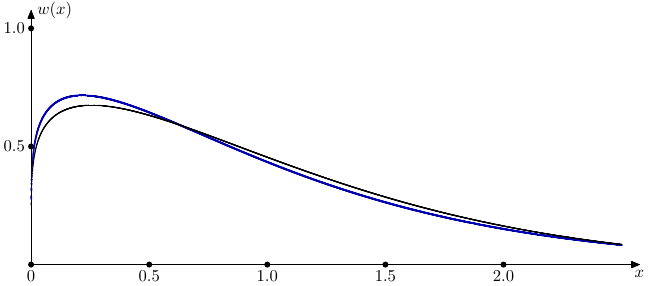}}
	\caption{For the probability generating functions taken from Fig. \ref{fig4}, the densities are compared. Blue curve corresponds to \er{b007}.}\lb{fig5}
\end{figure}

To create all the figures except Fig. \ref{fig1}, I used a very convenient Object Pascal-based programming environment, Delphi Community Edition with the MtxVec and Neslib components, as well as the Metapost interface module for TeXLive and MiKTeX. All Delphi modules for the functions described above were written me. The Fourier transform of $\Pi(\mathbf{i}y)$, see \er{003}, is computed on the interval $y\in[-10^4,10^4]$ using the trapezoidal rule with integration step $10^{-4}$. The same rule and step are used to compute the Fourier coefficients of $K(x)^n$, see \er{006}. To create Figure \ref{fig1}, I turned to Google AI.

{\section{Proof of Theorem \ref{mainT}}\lb{sec1}}

\begin{lemma}\lb{L1}
	Let $\mP_E$ be a set of polynomial probability generating functions with the expectation $P'(1)=E$. Denote $N=\lfloor E\rfloor$ the number which rounds $E$
	down to the nearest integer less than or equal to $E$. Then, the unique minimizer of the second derivative at $z=1$ is
	\[\lb{100}
	 {\rm argmin}_{P\in\mP_E}P''(1)=(N+1-E)z^N+(E-N)z^{N+1}.
	\]
\end{lemma}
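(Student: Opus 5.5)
This is a linear-programming fact about the distribution $q_k$ of $\xi$. Write $P(z)=\sum_k q_kz^k$ with $q_k\ge0$, $\sum_kq_k=1$ and $\sum_k kq_k=E$. Then
\[
P''(1)=\sum_k k(k-1)q_k=\sum_k k^2q_k-E,
\]
so minimizing $P''(1)$ means minimizing the second moment $\sum_k k^2q_k$ under fixed first moment $E$. The function $f(k)=k^2$ is strictly convex on the integers, and the idea is to compare it with its chord through the two integers $N$ and $N+1$ that bracket $E$. This chord is the linear function
\[
\ell(k)=N^2+(2N+1)(k-N)=(2N+1)k-N(N+1).
\]

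The key step is the pointwise inequality
\[
k^2-\ell(k)=(k-N)(k-N-1)\ge0\quad\text{for all integers }k,
\]
with equality exactly when $k\in\{N,N+1\}$. This holds because no integer lies strictly between $N$ and $N+1$. Summing against $q_k$ gives
\[
\sum_k k^2q_k\ \ge\ \sum_k\ell(k)q_k=(2N+1)E-N(N+1).
\]
The right-hand side depends only on $E$, so it is a lower bound valid for every $P\in\mP_E$.

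Next, equality holds iff $q_k=0$ for all $k\notin\{N,N+1\}$. The two remaining constraints $q_N+q_{N+1}=1$ and $Nq_N+(N+1)q_{N+1}=E$ then force $q_{N+1}=E-N$ and $q_N=N+1-E$. Both are nonnegative since $N\le E<N+1$. This gives exactly the polynomial in \er{100}, which belongs to $\mP_E$, and so establishes both existence and uniqueness of the minimizer.

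There is no real obstacle. The only points needing care are the following.
\begin{itemize}
\item The integer case $E=N$, where the minimizer degenerates to $z^N$; the formula \er{100} still covers it, since the coefficient of $z^{N+1}$ is $0$.
\item If one insists on the Schr\"oder setting, where $q_1=p>0$ is fixed, the lemma as stated concerns all of $\mP_E$, so no extra constraint enters.
\item Polynomiality is irrelevant to the argument, which works for any distribution with finite second moment; we only need the finite sums to be legitimate, which they are.
\end{itemize}
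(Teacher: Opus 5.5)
Your proof is correct, and it takes a genuinely different route from the paper's.

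\textbf{The paper's route (primal).} It works with the primal linear program. After capping the degree at some $M$, it shows that a feasible point with three nonzero coefficients $p_{n_1},p_{n_2},p_{n_3}$ can be moved along the direction \er{103}. That move preserves both constraints and changes $Z$ by a multiple of $(n_2-n_1)(n_3-n_1)\neq0$, so a minimizer has at most two support points. It then rules out two-point supports with a gap $n_3-n_1\ge2$ by an exchange step with a two-case analysis (\er{108}, \er{109}), which leaves only $\{N,N+1\}$.

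\textbf{Your route (dual).} Your chord $\ell(k)=(2N+1)k-N(N+1)$ is exactly an optimal dual solution for the two equality constraints. The summed inequality is weak duality. The reduced costs $(k-N)(k-N-1)$ are nonnegative on $\Z$ and vanish only at $N,N+1$. Complementary slackness then gives the minimum and its uniqueness in one step.

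\textbf{What your route buys.} It is shorter and needs neither the degree cap nor any vertex or existence-of-optimum consideration. It also gives the explicit value $\min P''(1)=2NE-N(N+1)=E(E-1)+(E-N)(N+1-E)$. As you remark, it applies verbatim to any offspring law with finite mean. That is exactly what the proof of Theorem \ref{T3} needs, since there $P_0$ is only assumed analytic near $1$, not polynomial, so strictly speaking it lies outside $\mP_E$. Your formula also shows that the last factor in \er{T001} should read $(N+1-E)$ rather than $(E-N+1)$; the conclusion drawn there is unaffected.

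\textbf{What the paper's route buys.} Its primal exchange argument derives the optimal support rather than verifying a guessed one. It makes visible the mechanism by which strict convexity of $n(n-1)$ penalizes spreading the mass.
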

{\it Proof.} Without loss of generality, we can assume that all the polynomials have a degree no greater than a sufficiently large $M$. Then the corresponding linear programming problem can be formulated as follows. Minimize 
\[\lb{101}
 Z=\sum_{n=2}^M (n-1)np_n
\]   
subject to
\[\lb{102}
 {\rm all}\ p_n\ge0,\ \ \ \sum_{n=0}^Mp_n=1,\ \ \ \sum_{n=1}^Mnp_n=E.
\]
The solution ${\bf p}^0=(p_n^0)_{n=0}^M$ is reached at some vertex of the compact convex polygon described by \er{102}. Suppose that there are $n_1<n_2<n_3$ such that $p_{n_1}^0\ne0$, $p_{n_2}^0\ne0$, $p_{n_3}^0\ne0$. Denote ${\bf p}^1=(p_{n}^1)_{n=0}^M$, where
\[\lb{103}
 p^1_{n_1}=1,\ \ \ p^1_{n_2}=\frac{n_1-n_3}{n_3-n_2},\ \ \ p^1_{n_3}=\frac{n_2-n_1}{n_3-n_2},\ \ \ {\rm other}\ p^1_n=0.
\] 
Then 
\[\lb{104}
 p^1_{n_1}+p^1_{n_2}+p^1_{n_3}=0,\ \ \ n_1p^1_{n_1}+n_2p^1_{n_2}+n_3p^1_{n_3}=0.
\]
Thus, for all sufficiently small $\ve>0$, ${\bf p}^0+\ve{\bf p}^1$ satisfy \er{102}. Hence, ${\bf p}^0$ is not a vertex. Moreover,
\[\lb{105}
 (n_1-1)n_1p^1_{n_1}+(n_2-1)n_2p^1_{n_2}+(n_3-1)n_3p^1_{n_3}=(n_2-n_1)(n_3-n_1)>0,
\]
and we can decrease $Z({\bf p}^0+\ve{\bf p}^1)$ taking small $\ve<0$. Thus, at most two of $p_n$ are nonzero. Suppose that $p^0_{n_1}\ne0$ and $p^0_{n_3}\ne0$, and others are $0$. Then, solving \er{102} for these two unknowns, we obtain
\[\lb{106}
 p^0_{n_1}=\frac{n_3-E}{n_3-n_1},\ \ \ p^0_{n_3}=\frac{E-n_1}{n_3-n_1},
\]
and, by \er{101},
\[\lb{107}
 Z({\bf p}^0)=(n_3+n_1-1)E-n_3n_1.
\]
If a PGF with mathematical expectation $E$ has all coefficients $p_n^0=0$, except at most $p_{n_1}^0$ and $p_{n_3}^0$, where $n_1\ne n_3$, then it must satisfy \er{106} and \er{107}.

Suppose that $n_3-n_1\ge2$ and $p_{n_1}^0p_{n_3}^0\ne0$. 

The first case, $E-n_1\ge1$. We take $\wt{\bf p}$ with at most two non-zero $\wt p_n$ having indices $n_2=n_1+1$ and $n_3$. By \er{106}, $\wt p_{n_2}\ge0$ ($n_2$ replaces $n_1$) and $\wt p_{n_3}>0$, and, hence, $\wt{\bf p}$ satisfies the constraints \er{102}. Using \er{106} and \er{107}, we obtain
\[\lb{108}
 Z({\bf p}^0)-Z(\wt{\bf p})=(n_1-n_2)(E-n_3)=n_3-E=(n_3-n_1)p^0_{n_1}>0.
\]
This contradicts the minimality of $Z({\bf p}^0)$. 

The second case, $E-n_1<1$. Since $n_3-n_1\ge2$, we have $n_3-E>1$. Let us take $\wt{\bf p}$ with at most two non-zero $\wt p_n$ having indices $n_1$ and $n_2=n_3-1$. By \er{106}, $\wt p_{n_2}\ge0$ ($n_2$ replaces $n_3$) and $\wt p_{n_1}>0$, and, hence, $\wt{\bf p}$ satisfies the constraints \er{102}. Using \er{106} and \er{107}, we obtain
\[\lb{109}
 Z({\bf p}^0)-Z(\wt{\bf p})=(n_3-n_2)(E-n_1)=E-n_1=(n_3-n_1)p^0_{n_3}>0.
\]
Again, this contradicts the minimality of $Z({\bf p}^0)$. 

Thus, $n_3-n_1\ge2$ is impossible. Hence, $n_3=n_1+1$ or only one among $p_{n_1}^0$ and $p_{n_3}^0$ is non-zero. In both cases, the RHS of \er{100} is the corresponding unique minimizer. \BBox

\begin{lemma}\lb{L2}
	Let $\mP_{p,E}$ be the set of polynomial probability generating functions with the expectation $P'(1)=E$ and, also, satisfying $P(0)=0$, $P'(0)=p$. Denote $N=\lfloor \frac{E-p}{1-p}\rfloor$. Then, the unique minimizer of the second derivative at $z=1$ is
	\[\lb{110}
	{\rm argmin}_{P\in\mP_{p,E}}P''(1)=pz+(N(1-p)+1-E)z^N+(E-p-N(1-p))z^{N+1}.
	\]
It is assumed that $E\ge 2-p$, because, otherwise, $\mP_{p,E}=\varnothing$, and \er{110} does not make sense.
\end{lemma}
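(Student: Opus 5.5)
The natural route is to reduce Lemma \ref{L2} to Lemma \ref{L1} by stripping off the fixed linear term. For $P\in\mP_{p,E}$ write $P(z)=pz+(1-p)Q(z)$ with $Q(z)=\sum_{n\ge2}q_nz^n$, $q_n=p_n/(1-p)\ge0$. The constraints $P(0)=0$, $P'(0)=p$, $P(1)=1$, $P'(1)=E$ are then equivalent to $Q$ being a polynomial probability generating function with $Q(0)=Q'(0)=0$, so $q_0=q_1=0$, and with
$$
Q'(1)=E_1:=\frac{E-p}{1-p}.
$$
Moreover, $P''(1)=(1-p)Q''(1)$, and since $1-p>0$ is fixed, minimizing $P''(1)$ over $\mP_{p,E}$ is the same as minimizing $Q''(1)$ over the class $\mP'_{E_1}$ of polynomial generating functions with mean $E_1$ and support in $\{2,3,\dots\}$. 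The correspondence $P\leftrightarrow Q$ is a bijection, so uniqueness transfers as well. The condition $E\ge2-p$ is exactly $E_1\ge2$, which is needed for this class to be nonempty.

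Next I would rerun the linear-programming argument of Lemma \ref{L1} with the index range restricted to $n\ge2$, rather than quoting Lemma \ref{L1} verbatim, since that lemma allows $q_0,q_1\neq0$. The vertex argument with the perturbation \er{103}–\er{105} uses only three indices in the support, so it goes through unchanged and leaves at most two nonzero coefficients $q_{n_1},q_{n_3}$ with $n_1<n_3$. The improvement step in the first case of Lemma \ref{L1} replaces $n_1$ by $n_1+1$; the second case replaces $n_3$ by $n_3-1>n_1\ge2$. In both cases the new indices stay $\ge2$, so the contradictions \er{108} and \er{109} are still available inside the restricted class.

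This leaves $Q$ supported on two consecutive integers $\{N,N+1\}$ with $N\le E_1\le N+1$, or on a single point when $E_1$ is an integer. By \er{106}, the coefficients are $q_N=N+1-E_1$ and $q_{N+1}=E_1-N$ with $N=\lfloor E_1\rfloor\ge2$.

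Multiplying back by $1-p$ gives
$$
(1-p)q_N=N(1-p)+1-E, \qquad (1-p)q_{N+1}=E-p-N(1-p),
$$
which is exactly \er{110}.

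The only delicate point is uniqueness when $E_1$ is an integer. Then the second coefficient vanishes, and one has to check that no other two-point support with $n_3-n_1\ge2$ achieves the same value. This follows because \er{108} and \er{109} are strict whenever both coefficients are positive. Apart from that, the proof is a direct transcription of Lemma \ref{L1}.
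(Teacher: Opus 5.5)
Your proposal is correct and follows the paper's route. The paper's proof of Lemma \ref{L2} just says to repeat the linear-programming argument of Lemma \ref{L1} with slight modifications, and your normalization $Q=(P-pz)/(1-p)$ with mean $E_1=\frac{E-p}{1-p}$, together with your check that the index shifts $n_1\mapsto n_1+1$ and $n_3\mapsto n_3-1$ never leave $\{2,3,\dots\}$, is exactly that modification made explicit. As a minor shortcut: since $E_1\ge 2$ gives $N\ge 2$, the minimizer supplied by Lemma \ref{L1} for mean $E_1$ already lies in your restricted class, so you could quote Lemma \ref{L1} directly instead of rerunning its argument.
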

{\it Proof.} The slightly modified Proof of Lemma \ref{L2} can be repeated. \BBox

\begin{corollary}\lb{C0} The same minimizer as in \er{110} solves also ${\rm argmin}_{P\in\mP_{p,E}}\deg P$.
\end{corollary}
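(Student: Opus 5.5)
The plan is to show that, for any $P\in\mP_{p,E}$, the degree satisfies $\deg P\ge N+1$ whenever $E-p-N(1-p)>0$, and $\deg P\ge N$ in the degenerate case where this coefficient vanishes. Combined with the explicit form \er{110}, whose degree is exactly $N+1$ (respectively $N$), this gives the claim. In particular, the minimizer in \er{110} attains the smallest possible degree.

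\emph{Step 1: a lower bound on the degree.} Write $P(z)=pz+\sum_{n=2}^{d}p_nz^n$ with $d=\deg P$, all $p_n\ge0$ and $\sum_{n\ge2}p_n=1-p$. The expectation constraint gives
\[
 E-p=\sum_{n=2}^d np_n\le d\sum_{n=2}^dp_n=d(1-p).
\]
Hence $d\ge \frac{E-p}{1-p}$, and since $d$ is an integer, $d\ge\lceil\frac{E-p}{1-p}\rceil$. If $\frac{E-p}{1-p}$ is not an integer, then $\lceil\frac{E-p}{1-p}\rceil=N+1$. If it is an integer, then it equals $N$. In that case $E-p-N(1-p)=0$, so the $z^{N+1}$ coefficient in \er{110} vanishes and the polynomial has degree $N$.

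\emph{Step 2: feasibility and degree of \er{110}.} The coefficients $N(1-p)+1-E$ and $E-p-N(1-p)$ are nonnegative by the definition of the floor. Their sum is $1-p$, and $N(N(1-p)+1-E)+(N+1)(E-p-N(1-p))=E-p$ by direct expansion. So \er{110} lies in $\mP_{p,E}$. Its degree is $N+1$ when $E-p-N(1-p)>0$, and $N$ otherwise. This matches the bound from Step 1, so \er{110} is a degree minimizer.

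\emph{Step 3: interpreting ``the same minimizer''.} The main subtlety is that the degree minimizer need not be unique. For example, other distributions supported on $\{1,\dots,N+1\}$ can also have degree $N+1$. So the statement should be read as saying that \er{110} belongs to the set of degree minimizers, and I would phrase the conclusion that way.

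If uniqueness is desired, it holds only in the degenerate integer case. There, the inequality in Step 1 is an equality, which forces $p_n=0$ for all $2\le n<d$. This pins down $P=pz+(1-p)z^N$, which coincides with \er{110}.
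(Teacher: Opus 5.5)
Your proof is correct and follows the paper's route. The single inequality $E-p=\sum_{n\ge2}np_n\le(1-p)\deg P$ gives $\deg P\ge\lceil\frac{E-p}{1-p}\rceil$, and \er{110} attains this bound; you also check feasibility and the exact degree of \er{110}, which the paper leaves implicit.

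One small correction to your Step 3 aside: it is not true that uniqueness holds only in the integer case. When $2<\frac{E-p}{1-p}<3$ (i.e.\ $N=2$), the support is forced to be $\{2,3\}$ and the two linear constraints fix $p_2,p_3$, so the minimizer is unique there too. Non-uniqueness occurs exactly for non-integer $\frac{E-p}{1-p}>3$.
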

{\it Proof.} It follows from the properties of the solution in Lemma \ref{L2}, i.e. from $E=p+2p_2+...+(\deg P)p_{\deg P}\le p+(1-p)\deg P$, which gives $\deg P\ge (E-p)/(1-p)$. \BBox

\begin{theorem}\lb{T1} Suppose that the probability generating function $P(z)$ is analytic in some neighborhood of $z=1$, and $E=P'(1)>1$. Then the unique solution of the Poincar\'e functional equation 
\[\lb{111}
 \Pi(z)=P(\Pi(\frac{z}E)),\ \ \ \Pi(0)=1,\ \ \ \Pi'(0)=-1
\]
exists in some neighborhood of $z=0$. It can be computed by
\[\lb{111a}
 \Pi(z)=\lim_{t\to+\iy}\underbrace{P\circ...\circ P}_{t}(1-\frac{z}{E^{t}}).
\]
\end{theorem}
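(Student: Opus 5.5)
Shift variables to put the repelling fixed point at the origin. Set $f(u)=1-P(1-u)$, which is analytic near $u=0$, with $f(0)=0$ and $f'(0)=E>1$. Then \er{111} for $g(z)=1-\Pi(z)$ becomes $g(z)=f(g(z/E))$, or equivalently $g(Ez)=f(g(z))$, with $g(0)=0$ and $g'(0)=1$. This is the classical Poincar\'e equation at a repelling fixed point, and I would prove existence, uniqueness and the limit formula \er{111a} together by working with the local inverse.

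Since $f'(0)=E\ne0$, there is a local inverse $h=f^{-1}$ near $0$. It satisfies $h(0)=0$, $h'(0)=E^{-1}=:\lambda\in(0,1)$, so $0$ is attracting for $h$. Koenigs' theorem gives a linearizer $\psi(u)=\lim_t \lambda^{-t}h^{\circ t}(u)$ with $\psi\circ h=\lambda\psi$ and $\psi'(0)=1$. The convergence step is the one I would check carefully. Choose $r$ small so that $|h(u)-\lambda u|\le C|u|^2$ and $|h(u)|\le \mu|u|$ on $|u|<r$, for some $\lambda<\mu<1$ with $\mu^2<\lambda$ after shrinking $r$. Then $|h^{\circ t}(u)|\le\mu^t|u|$, and
$$|\lambda^{-t-1}h^{\circ(t+1)}(u)-\lambda^{-t}h^{\circ t}(u)|\le \lambda^{-t-1}C\mu^{2t}|u|^2,$$
which is summable. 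So the limit is uniform and analytic, and it is univalent near $0$ because $\psi'(0)=1$. Put $g=\psi^{-1}$ near $0$. Then $h(g(z))=g(\lambda z)$, that is, $g(z)=f(g(z/E))$. This equation extends $g$ to any disc, because the right-hand side only needs $g$ on the smaller disc $|z|<\rho/E$. Hence $g$ is entire whenever $f$ is entire, though here only a neighborhood is claimed. This gives existence.

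For uniqueness, write a solution as $g(z)=z+\sum_{k\ge2}a_kz^k$ and compare coefficients of $z^k$ in $g(Ez)=f(g(z))$. This gives $(E^k-E)a_k=$ a polynomial in $a_2,\dots,a_{k-1}$ and the Taylor coefficients of $f$. Since $E^k\ne E$ for $k\ge2$, each $a_k$ is determined recursively, so any analytic solution coincides with the one constructed above.

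For \er{111a}, iterate the equation: $g(z)=f^{\circ t}(g(z/E^t))$. Since $g(w)=w+O(w^2)$, we have $g(z/E^t)=z/E^t+\delta_t$ with $|\delta_t|\le C|z|^2E^{-2t}$. We need
$$f^{\circ t}(z/E^t)-f^{\circ t}(z/E^t+\delta_t)\to0.$$
This is the main obstacle, because $f^{\circ t}$ expands distances by roughly $E^t$. I would estimate the derivative along the orbit: for $w$ with $|w|\le 2|z|E^{-t}$, the points $f^{\circ s}(w)$ stay of size $\lesssim |z|E^{s-t}$ while $s\le t$, provided $|z|$ is small. Then $|(f^{\circ t})'(w)|\le\prod_{s<t}(E+C|z|E^{s-t})\le E^t e^{C'|z|}$. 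So the difference is at most $E^te^{C'|z|}\,C|z|^2E^{-2t}\to0$. Hence $1-P^{\circ t}(1-z/E^t)=f^{\circ t}(z/E^t)\to g(z)$ locally uniformly for small $|z|$, which is \er{111a}.
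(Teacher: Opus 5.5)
Your proof is correct, but it takes a genuinely different route from the paper.

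\textbf{The paper's route.} The paper never inverts $P$. It substitutes $\Pi(z)=1-z+z^2\Psi(z)$ and rewrites \er{111} as the fixed-point problem $\Psi=\cT(\Psi)$ of \er{114}. It then shows that $\cT$ maps a ball of $\mathbb{H}^\infty_\delta$ into itself and is a contraction for small $\delta$. The payoff comes in the last step. $\cT$ is just the operator $\Pi\mapsto P\circ\Pi(\cdot/E)$ written in the $\Psi$-variable. Its Picard iterates started at $\Psi_0=0$, i.e.\ at $\Pi_0(z)=1-z$, are therefore exactly $P^{\circ t}(1-z/E^t)$. So \er{111a} is simply the convergence of the Banach iteration and needs no separate argument.

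\textbf{Your route.} You pass to the inverse branch $h$ near $1$, where the fixed point is attracting, and construct $g=1-\Pi$ as the inverse of the Koenigs linearizer. Uniqueness then comes from the recursion $(E^k-E)a_k=\dots$. You must pay separately for \er{111a} with the orbit estimate $|(f^{\circ t})'(w)|\le E^t e^{C'|z|}$ on $|w|\le 2|z|E^{-t}$. That estimate is sound. The bound $|f^{\circ s}(w)|\lesssim |z|E^{s-t}$ follows by a short induction from $|f(u)|\le E|u|(1+c|u|)$ and the summability of $\sum_{s<t}E^{s-t}$. The $O(E^{-2t})$ size of $\delta_t$ then beats the $E^t$ expansion.

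\textbf{What your route buys.} First, a stronger uniqueness statement: it holds among all analytic, indeed formal, solutions, not only those with $\|\Psi\|\le R$ on the chosen disc. Second, it rests on standard Koenigs theory rather than on the constant bookkeeping in \er{115}--\er{117}. That bookkeeping has a slip: $\frac{R}{E}+\frac{R}{E(E-1)}=\frac{R}{E-1}$, so the paper's choice $R=4C(E-1)/E$ only closes for $E\ge 2$. It should be replaced by any $R\ge 4C/(E(E-1))$.
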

{\it Proof.} We have
\[\lb{112}
 P(1+z)=1+Ez+z^2Q(z),\ \ \ |z|\le\d_0
\]
with some bounded analytic function $Q(z)$ satisfying $|Q(z)|\le C$ for $|z|\le\d_0$. Here, $\d_0>0$ and $C>0$ are some constant. Instead of $\Pi(z)$, we are looking for $\Psi(z)$ defined by
\[\lb{113}
 \Pi(z)=1-z+z^2\Psi(z).
\]
Substituting \er{113} into \er{111} and using \er{112}, we obtain
\[\lb{114}
 \Psi(z)=\frac1E\Psi\lt(\frac zE\rt)+\frac1{E^2}\lt(1-\frac zE\Psi\lt(\frac zE\rt)\rt)^2Q\lt(-\frac zE+\frac{z^2}{E^2}\Psi\lt(\frac zE\rt)\rt)=:\cT(\Psi)(z).
\]
Denote
\[\lb{115}
 R=4C\frac{E-1}{E},\ \ \ \d_1=\min\lt\{\frac{E}{R},\frac{\d_0 E}2\rt\}.
\]
The parameters in \er{115} are chosen such that if $|z|\le\d_1$ then
\[\lb{116}
 \lt|-\frac zE+\frac{z^2}{E^2}\Psi\lt(\frac zE\rt)\rt|\le\frac{\d_1}{E}+\frac{R\d_1^2}{E^2}\le\frac{\d_0}2+\frac{R\d_1}{E}\cdot\frac{\d_1}{E}\le\frac{\d_0}2+\frac{\d_0}2\le\d_0.
\]
Hence, $Q$ in \er{114} is less than $C$, see below \er{112}, which gives
\[\lb{117}
 |\cT(S)(z)|\le\frac{R}{E}+\frac C{E^2}\lt(1+\frac{R\d_1}{E}\rt)^2\le\frac{R}{E}+\frac{4C}{E^2}=\frac{R}E+\frac{R}{E(E-1)}=R,
\]
for any analytic function $S(z)$ satisfying $|S(z)|\le R$ when $|z|\le\d_1$.
Thus $\cT:\mathbb{B}_{\mathbb{H}^{\iy}_{\d}}(R)\to\mathbb{B}_{\mathbb{H}^{\iy}_{\d}}(R)$ acts on the ball of radius $R$ in the corresponding Hardy space of analytic functions defined on $\mathbb{S}_{\d}=\{z:\ |z|<\d\}$, for any $\d\le\d_1$. Moreover, since $E>1$, the main term $\Psi(z/E)/E$ in \er{114} is a contraction mapping, and the second term can be made arbitrarily small by choosing a small $\d$, because $|z|<\d$. Thus, $\cT:\mathbb{B}_{\mathbb{H}^{\iy}_{\d}}(R)\to\mathbb{B}_{\mathbb{H}^{\iy}_{\d}}(R)$ is a contractive mapping for all sufficiently small $\d>0$. By the Banach Fixed-Point Theorem, there is a unique solution of \er{114}, which, with \er{113}, gives the analytic solution of \er{112}. As a fixed point solution of \er{111}, $\Pi(z)$ can be computed by \er{111a}. \BBox

\begin{corollary}\lb{C1}
 Under assumptions of Theorem \ref{T1}, suppose also that 
\[\lb{c001}
 1-\{re^{i\vp}:\ \vp\in(-\vp_0,\vp_0),\ r\in(0,r_0)\}\ss\mJ_P(0).
\]
Then $\Pi(z)$ can be analytically extended to the sector $S_{\vp_0}:=\{z:\ |\arg z|<\vp_0\}$. The image of this sector satisfies $\Pi(S_{\vp_0})\ss\mJ_P(0)\cup\{1\}$.
\end{corollary}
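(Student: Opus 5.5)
The idea is to extend $\Pi$ from the small disc of Theorem \ref{T1} using the functional equation \er{111} in the form $\Pi(z)=P(\Pi(z/E))$, iterated as many times as needed. By Theorem \ref{T1}, $\Pi$ is analytic in $\mathbb{S}_\d=\{|z|<\d\}$ for some $\d>0$, with $\Pi(z)=1-z+O(z^2)$.

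\textbf{Step 1: the local piece of the sector.} First I show that the truncated sector $S_{\vp_0}\cap\mathbb{S}_{\d'}$ is mapped into $\mJ_P(0)$ for some small $\d'$. Since $\Pi(z)=1-z(1+O(z))$, we have $1-\Pi(z)=re^{i\psi}$ with $r=|z|(1+O(|z|))$ and $\psi=\arg z+O(|z|)$. This naive estimate only places $\psi$ in a slightly enlarged sector. To fix this, I would do the following.
\begin{itemize}
\item Fix any $\vp_1<\vp_0$. For small $\d'$, the points $z$ with $|\arg z|\le\vp_1$, $0<|z|<\d'$ satisfy $|\psi|<\vp_0$ and $r<r_0$, so $\Pi(z)$ lies in the set \er{c001}, hence in $\mJ_P(0)$.
\item Because $\vp_1<\vp_0$ is arbitrary, each compact subsector is eventually handled.
\end{itemize}

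\textbf{Step 2: extension by iteration.} For $z\in S_{\vp_0}$, choose $t$ with $|z|/E^t<\d'$ and $|\arg z|\le\vp_1<\vp_0$, and define
$$\Pi(z)=P^{\circ t}(\Pi(z/E^t)).$$
This definition needs three checks.
\begin{itemize}
\item \emph{Analyticity.} $\mJ_P(0)$ is the Fatou component containing $\mathbb{B}_1$ and $0$, so it is forward invariant under $P$: $P$ is analytic on it and maps it into itself. Hence $P^{\circ t}$ is analytic on $\mJ_P(0)$ and maps it into $\mJ_P(0)$.
\item \emph{Range.} The composition is therefore analytic on $E^t(S_{\vp_1}\cap\mathbb{S}_{\d'})$, with values in $\mJ_P(0)$.
\item \emph{Independence of $t$.} For larger $t$, the definitions agree by \er{111} applied inside the small disc. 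By the identity theorem they agree on overlaps.
\end{itemize}
Taking the union over $t$ and over $\vp_1<\vp_0$ yields an analytic extension to $S_{\vp_0}$ with $\Pi(S_{\vp_0})\ss\mJ_P(0)$. The point $1=\Pi(0)$ arises only at the vertex, which explains the $\cup\{1\}$ in the statement. The extension still satisfies \er{111} by analytic continuation.

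\textbf{Main obstacle.} The delicate point is the invariance $P(\mJ_P(0))\ss\mJ_P(0)$ together with analyticity of $P$ there. For entire or polynomial $P$ this is standard. In general, $P$ is only analytic in $\mathbb{B}_1$ plus a neighborhood of $1$, so I would take as part of the definition of $\mJ_P(0)$ that it lies in the domain of $P$ and that orbits converge to $0$. Under that reading, invariance is immediate: if orbits starting at $w$ converge to $0$, so do orbits starting at $P(w)$, and connectedness is preserved by continuity. The angular bookkeeping in Step 1 is routine once one works with strict subsectors $\vp_1<\vp_0$.
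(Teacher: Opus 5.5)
Your proposal is correct and is essentially the paper's argument: the expansion $\Pi(z)=1-z+O(z^2)$ together with \er{c001} places $\Pi$ of small points of the sector in $\mJ_P(0)$, and the extension is then $\Pi(z)=P^{\circ t}(\Pi(z/E^t))$, using the forward invariance $P(\mJ_P(0))\ss\mJ_P(0)$. You make explicit what the paper's two-line proof leaves implicit: restricting to strict subsectors $|\arg z|\le\vp_1<\vp_0$ to get uniformity, checking consistency in $t$ via \er{111}, and justifying the invariance.
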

{\it Proof.} If $|\arg z|<\vp_0$ and $z\ne0$ then, for any sufficiently small $\ve>0$, $\Pi(\ve z)=1-\ve z+O(\ve^2)$ belongs to $\mJ_P(0)$ by \er{c001}. Thus, for any $n\in\N$, the value 
\[\lb{c001a}
 \Pi(E^n\ve z)=\underbrace{P\circ...\circ P}_{n}(\Pi(\ve z))
\] 
is well defined by \er{111} and the fact that $P(\mJ_P(0))\ss\mJ_P(0)$. \BBox

\begin{theorem}\lb{T3}
	Let $P_0(z)$ be a probability generating function analytic in some neighborhood of $z=1$ and satisfying  $P_0(z)=pz+...$ with $0<p<1$. Then $\Pi_0(z)$, the solution of \er{111}, can be analytically extended to a sector $S_{\vp_0}$, see definition in Corollary \ref{C1}, with some $\vp_0>\pi/2$. Moreover, $\Pi_0(z)\to0$ uniformly for $z\to\iy$ and $|\arg z|\le\vp_0$.
\end{theorem}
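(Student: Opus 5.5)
The plan is to combine three facts: the local analyticity of $\Pi_0$ near $0$ from Theorem \ref{T1}, the probabilistic representation of $\Pi_0$ on the closed right half-plane, and the contraction of $P_0$ towards its attracting fixed point $0$. Together they let us push a small sector of opening slightly larger than $\pi/2$ out to infinity with the functional equation \er{111}.

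\textbf{Step 1 (the half-plane).} From \er{111a} and the definition of $W$, for $\Re z\ge0$ we have $\Pi_0(z)=\mathbb{E}e^{-zW}$. This is continuous on $\{\Re z\ge 0\}$ and analytic inside, and it agrees with the local solution of Theorem \ref{T1} near $0$. In the Schr\"oder case $W$ has a density $w$, so:
\begin{itemize}
\item $|\Pi_0(z)|<1$ for all $z\ne0$ with $\Re z\ge0$, because $|\mathbb{E}e^{-\mathbf{i}yW}|=1$ for some $y\ne0$ would force $W$ to be lattice-supported;
\item $\Pi_0(z)\to0$ as $|z|\to\iy$ uniformly in $\Re z\ge0$, by a Riemann--Lebesgue argument for the Laplace transform of an $L^1$ density.
\end{itemize}
I expect the main obstacle to be justifying the strict inequality on the imaginary axis, that is, excluding $|\Pi_0(\mathbf{i}y)|=1$. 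I would first invoke the absolute continuity of $W$ as a standard fact. If that is not considered available here, I would argue directly: $|\Pi_0(\mathbf{i}y_0)|=1$ together with \er{111} would give $|\Pi_0(\mathbf{i}y_0E^{-n})|=1$ for all $n$, since $|P_0(u)|<1$ for $|u|\le1$, $u\ne1$ (using $p>0$). This contradicts $\Pi_0(\mathbf{i}y)=1-\mathbf{i}y+O(y^2)$, because the latter implies $|\Pi_0(\mathbf{i}y)|^2=1+(1-2\Re\Psi\text{-term})y^2+\dots$. Closing this loop requires care; the cleanest route is a variance computation showing that $|\Pi_0(\mathbf{i}y)|^2=1-\mathrm{Var}(W)y^2+O(y^3)$ with $\mathrm{Var}(W)>0$.

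\textbf{Step 2 (a slightly wider local sector).} Let $\d>0$ be such that $\Pi_0$ is analytic on $|z|<2\d$ (Theorem \ref{T1}). Consider the compact half-annulus
$$A=\{\d/E\le|z|\le\d,\ |\arg z|\le\pi/2\}.$$
By Step 1, $\max_A|\Pi_0|=q<1$. By uniform continuity of $\Pi_0$ on $|z|\le\d$, there is $\eta>0$ such that $|\Pi_0|\le r:=(1+q)/2$ on the widened set
$$A_\eta=\{\d/E\le|z|\le\d,\ |\arg z|\le\pi/2+\eta\}.$$
Set $\vp_0=\pi/2+\eta$.

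\textbf{Step 3 (extension via the functional equation).} Every $z\in S_{\vp_0}$ with $|z|\ge\d/E$ can be written as $z=E^nz'$ with $z'\in A_\eta$ and $n\ge0$. Define $\Pi_0(z)=P_0^{\circ n}(\Pi_0(z'))$. This is well defined and analytic, because $\Pi_0(z')$ lies in the disc $|u|\le r$ and $P_0$ is analytic on $\mathbb{B}_1$ and maps it into itself. Consistency on overlaps, and agreement with the original $\Pi_0$, follow from \er{111} and the identity theorem. The values lie in $\mathbb{B}_1\ss\mJ_{P_0}(0)$.

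\textbf{Step 4 (uniform decay).} Since $P_0(0)=0$, $P_0(\mathbb{B}_1)\ss\mathbb{B}_1$ and $P_0$ is not a rotation ($|P_0'(0)|=p<1$), the Schwarz lemma gives $|P_0(u)|\le c|u|$ for $|u|\le r$, with some $c=c(r)<1$. Hence $|\Pi_0(E^nz')|\le c^n r$ uniformly for $z'\in A_\eta$. This gives $\Pi_0(z)\to0$ uniformly as $z\to\iy$ in $|\arg z|\le\vp_0$, which completes the proof.
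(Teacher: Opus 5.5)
\textbf{Verdict: correct, with a different route for the key inequality.} Your skeleton is exactly the paper's \er{T005}--\er{T006}, with $r_0=\delta/E$:
\begin{enumerate}
\item take a compact half-annulus $\{\delta/E\le|z|\le\delta,\ |\arg z|\le\pi/2\}$ on which $|\Pi_0|$ is bounded away from $1$;
\item widen the angle by uniform continuity;
\item push the sector out by $\Pi_0(Ez)=P_0(\Pi_0(z))$ and the contraction $|P_0(u)|\le(p+(1-p)r)|u|$.
\end{enumerate}

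\textbf{Where you differ.} The difference is how you get $|\Pi_0|<1$ on the punctured closed half-disc. Your main route is global and probabilistic. You identify $\Pi_0(z)=\mathbb{E}e^{-zW}$ and use the absolute continuity, hence non-lattice nature, of $W$. This gives $|\Pi_0|<1$ on all of $\{\Re z\ge0\}\setminus\{0\}$.

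The paper's route is purely local and analytic. From \er{111} it gets $\Pi_0''(0)=P_0''(1)/(E^2-E)$. It shows this exceeds $1$; this is the role of Lemma \ref{L1}, and it amounts to $P_0''(1)-E^2+E=\mathrm{Var}\,\xi>0$. It then reads off from the second-order Taylor expansion that $|\Pi_0(x+\mathbf{i}y)|<1$ for all small $z\ne0$ with $x\ge0$, and simply chooses $r_0$ small. This is precisely your fallback ``variance computation'', since $\Pi_0''(0)-1=\mathrm{Var}(W)$.

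\textbf{What each approach buys.} The paper's version is self-contained and uses only $0<p<1$. It needs neither the identification of $\Pi_0$ with the Laplace transform of $W$ nor the absolute continuity of $W$, both of which are external inputs from branching-process theory. Your version gives a stronger global inequality, but that is never needed, since you may always shrink $\delta$. For the same reason, your Riemann--Lebesgue bullet is not used anywhere in Steps 2--4.

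\textbf{A slip in the fallback.} The claim that $|P_0(u)|<1$ for $|u|\le1$, $u\ne1$, is false in general. For example, $P_0(z)=(z+z^3)/2$ gives $P_0(-1)=-1$. What your iteration argument actually needs is $|P_0(u)|\le|u|$ on the closed unit disc, which holds because $P_0(0)=0$. That yields $1=|P_0(\Pi_0(\mathbf{i}y_0/E))|\le|\Pi_0(\mathbf{i}y_0/E)|\le1$, and the rest of your argument goes through unchanged.
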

{\it Proof.}  By Lemma \ref{L1} we have
\[\lb{T001}
 P''_0(1)\ge\min_{P\in\mP_E}P''(1)=E(E-1)+(E-N)(E-N+1).
\]
Thus $P''_0(1)\ge E^2-E$ and the equality is reached only if $P_0(z)=z^N$. In our case, $P_0(z)\ne z^N$, and, hence, $P_0''(1)>E^2-E$, where $E=P_0'(1)\ge2-p>1$. Identity \er{111} allows us to compute $\Pi_0''(0)$, namely $\Pi_0''(0)=P_0''(1)/(E^2-E)>1$. Thus, for any sufficiently small real $y\ne0$,
\[\lb{T002}
 \Pi_0(\mathbf{i}y)=1-\mathbf{i}y-\frac{\Pi_0''(0)y^2}2+O(y^3)\in\mathbb{S}_1,
\]
since
\[\lb{T003}
 |\Pi_0(\mathbf{i}y)|^2=1+(1-\Pi_0''(0))y^2+O(y^3)<1.
\]
Similarly, for both sufficiently small $x\ge0$ and $y\ne0$
\[\lb{T004}
 \Pi_0(x+\mathbf{i}y)\in\mathbb{S}_1.
\]
Thus, due to \er{T004} and continuity of $\Pi_0$, there are $\vp_0>\pi/2$, sufficiently small $r_0>0$, and some $1>\d>0$ such that
\[\lb{T005}
 \Pi_0(\{re^{\mathbf{i}\vp}:\ r\in[r_0,Er_0],\ \vp\in[-\vp_0,\vp_0]\})\ss\mathbb{S}_{\d}.
\]
Formulas \er{111} and \er{121} lead to
\[\lb{T006}
\Pi_0(\{re^{\mathbf{i}\vp}:\ r\in[E^nr_0,E^{n+1}r_0],\ \vp\in[-\vp_0,\vp_0]\})\ss\underbrace{P\circ...\circ P}_{n}(\mathbb{S}_{\d})\ss\mathbb{S}_{\d(p+(1-p)\d)^n},
\]
which gives the required uniform convergence to $0$ in the sector, since $\d(p+(1-p)\d)^n\to0$ for $n\to\iy$. \BBox

\begin{corollary}\lb{C2}
	If PGF $P(z)$ satisfies $P(0)=0$ and $0<P'(0)<1$ then the critical angle $\vt>\pi/2$.
\end{corollary}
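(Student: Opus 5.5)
The plan is to derive Corollary \ref{C2} directly from Theorem \ref{T3}. Theorem \ref{T3} produces an analytic extension of $\Pi$ to a sector $S_{\vp_0}$ with $\vp_0>\pi/2$, along which $\Pi(z)\to0$. We then transport this sector to a sector with vertex at $1$ inside $\mJ_P(0)$, using the local conformality of $\Pi$ at $0$.

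First, we show that $\Pi(S_{\vp_0})\ss\mJ_P(0)$. Take $z\in S_{\vp_0}$. By the functional equation \er{111}, extended by analytic continuation,
$$\underbrace{P\circ\cdots\circ P}_{n}(\Pi(z))=\Pi(E^nz).$$
Since $E^nz\in S_{\vp_0}$ and $|E^nz|\to\iy$, the uniform convergence statement of Theorem \ref{T3} gives $\Pi(E^nz)\to0$. Hence every point of $\Pi(S_{\vp_0})$ lies in the basin of attraction of $0$. This set is connected, being the continuous image of a sector. It also meets $\mathbb{B}_1$: for small real $x>0$ we have $\Pi(x)=1-x+O(x^2)\in(0,1)$. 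A connected subset of the basin of $0$ that meets the component $\mJ_P(0)$ lies entirely in that component. Therefore $\Pi(S_{\vp_0})\ss\mJ_P(0)$.

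Next, we invert $\Pi$ near $0$. Since $\Pi(0)=1$ and $\Pi'(0)=-1$, the inverse $\Pi^{-1}$ exists and is analytic in a neighborhood of $1$, with
$$\Pi^{-1}(1-w)=w+O(w^2),\qquad w\to0.$$
Hence $\arg\Pi^{-1}(1-w)=\arg w+O(|w|)$. Fix any $\vp_1\in(\pi/2,\vp_0)$. Then there is $r_1>0$ such that, whenever $0<|w|<r_1$ and $|\arg w|<\vp_1$, the point $\zeta=\Pi^{-1}(1-w)$ satisfies $|\arg\zeta|<\vp_0$ and is small. So $\zeta$ lies in $S_{\vp_0}$, in the region where the local inverse and the extension of $\Pi$ agree. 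Consequently $1-w=\Pi(\zeta)\in\mJ_P(0)$. This shows that the set $1-\{re^{\mathbf{i}\vp}:\ |\vp|<\vp_1,\ 0<r<r_1\}$ lies in $\mJ_P(0)$, which means the critical angle satisfies $\vt\ge\vp_1>\pi/2$.

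The hypotheses of Theorem \ref{T3} hold here: $P(0)=0$ and $0<P'(0)=p<1$, and, as in the paper's setting, $P$ is analytic near $1$. The only delicate point is the first step. We need that the analytic continuation of $\Pi$ to $S_{\vp_0}$ still satisfies the functional equation, which follows from the identity theorem because $S_{\vp_0}$ is connected. We also need the connectedness argument that places the image in the specific component $\mJ_P(0)$, rather than merely in the basin of $0$. The second step is a routine small-angle perturbation.
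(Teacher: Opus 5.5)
Your proof is correct and follows the route the paper intends: the paper writes no separate proof of Corollary~\ref{C2} and treats it as an immediate consequence of Theorem~\ref{T3}, namely the extension of $\Pi$ to a sector $S_{\vp_0}$ with $\vp_0>\pi/2$ and $\Pi\to0$ there, transported to a sector at $1$ through the local conformality $\Pi(z)=1-z+O(z^2)$. Your connectedness argument, which places $\Pi(S_{\vp_0})$ in the component $\mJ_P(0)$ and not merely in the basin of $0$, and your local inversion near $1$ are exactly the details the paper leaves implicit.
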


\begin{theorem}\lb{T4} Suppose that the probability generating function $P(z)$ satisfies $P(0)=0$ and $0<p:=P'(0)<1$. Then, the unique solution of the Shr\"oder functional equation 
\[\lb{118}
 \Phi(P(z))=p\Phi(z),\ \ \ \Phi(0)=0,\ \ \ \Phi'(0)=1
\]
exists in some neighborhood of $z=0$. It can be extended analytically to the open connected component of the filled Julia set $\cJ_P(0)$. For its computation, one may use
\[\lb{118a}
\Phi(z)=\lim_{t\to+\iy}p^{-t}\underbrace{P\circ...\circ P}_{t}(z).
\]
\end{theorem}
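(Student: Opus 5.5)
We will prove Theorem \ref{T4} in the same way as Theorem \ref{T1}: first obtain a local solution near the attracting fixed point $0$ as a fixed point of a contraction, and then extend it along orbits using the functional equation.

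\textbf{Local solution.} Since $P(0)=0$ and $P'(0)=p\in(0,1)$, we write
\[
P(z)=pz+z^2Q(z),\qquad |Q(z)|\le C,\qquad |z|\le\d_0 .
\]
It is more convenient to construct the inverse-type object, or equivalently to work with $\Phi_t(z):=p^{-t}P^{\circ t}(z)$, where $P^{\circ t}=P\circ\dots\circ P$ ($t$ times). Choose $\d\le\d_0$ small enough that $|P(z)|\le q|z|$ for $|z|\le\d$, where $q:=p+C\d<1$. Then $P^{\circ t}(z)=O(q^t|z|)$ on $\mathbb{B}_\d$. Next,
\[
\Phi_{t+1}(z)-\Phi_t(z)=p^{-t-1}\bigl(P(P^{\circ t}(z))-pP^{\circ t}(z)\bigr)=p^{-t-1}(P^{\circ t}(z))^2Q(P^{\circ t}(z)).
\]
This is bounded by $p^{-t-1}C\,|P^{\circ t}(z)|^2$. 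To make this summable we need the sharper estimate $|P^{\circ t}(z)|\le K(p+\eta)^t|z|$ with $(p+\eta)^2<p$. This holds if we shrink $\d$ so that $C\d=\eta$ is small, since $(p+\eta)^2/p\to p<1$ as $\eta\to0$. Consequently the series $\sum_t(\Phi_{t+1}-\Phi_t)$ converges uniformly on $\mathbb{B}_\d$, because its terms are bounded by $Cp^{-1}\d^2\bigl((p+\eta)^2/p\bigr)^t$. The limit $\Phi$ in \er{118a} is therefore analytic on $\mathbb{B}_\d$, with $\Phi(0)=0$ and $\Phi'(0)=\lim_t p^{-t}(P^{\circ t})'(0)=1$. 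The functional equation follows directly:
\[
\Phi(P(z))=\lim_t p^{-t}P^{\circ(t+1)}(z)=p\,\Phi(z).
\]

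\textbf{Uniqueness.} Suppose $\Phi_1,\Phi_2$ both solve \er{118} near $0$, and compare Taylor coefficients. For the difference $D=\Phi_1-\Phi_2=\sum_{k\ge2}d_kz^k$, let $k$ be the lowest index with $d_k\ne0$. The equation $D(P(z))=pD(z)$ at order $z^k$ gives $d_kp^k=pd_k$. Since $p^{k}\ne p$ for $k\ge2$, this forces $d_k=0$, a contradiction. Hence $D\equiv0$.

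\textbf{Extension to $\cJ_P(0)$.} Here $\cJ_P(0)$ is the open connected component of the filled Julia set containing $0$ (the set the introduction denotes $\mJ_P(0)$). On this component the iterates $P^{\circ t}$ converge to $0$ locally uniformly, since $0$ is the unique attracting point and the family of iterates is normal there. Take a compact set $K\ss\cJ_P(0)$. There is $T$ with $P^{\circ T}(K)\ss\mathbb{B}_\d$, and we define
\[
\Phi(z)=p^{-T}\Phi\bigl(P^{\circ T}(z)\bigr)\quad\text{on }K .
\]
This definition is independent of $T$ by the functional equation and coincides with \er{118a}. It is analytic, and different choices of $K$ give consistent values, so $\Phi$ extends analytically to all of $\cJ_P(0)$.

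\textbf{Main obstacle.} The delicate step is the locally uniform convergence $P^{\circ t}\to0$ on the whole component $\cJ_P(0)$. For polynomial $P$ this is the standard Fatou-component argument: the iterates form a normal family, every limit function is constant on the component, and that constant must be the attracting point $0$. For a general $P$ that is merely analytic in $\mathbb{B}_1$ and near $1$, I would instead define $\cJ_P(0)$ directly as the connected component containing $0$ of the open set of points whose orbit enters $\mathbb{B}_\d$. With that definition the extension above is immediate.
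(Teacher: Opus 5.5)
Your proof is correct and is essentially the paper's argument. Writing $\Phi=z+z^2\Psi$, the paper's fixed-point iteration $\Psi_{k+1}=\frac{P(z)-pz}{pz^2}+\mathcal{T}(\Psi_k)$, started from $\Psi_0=0$, is exactly $\Phi_{k+1}=p^{-1}\Phi_k\circ P$, i.e.\ $\Phi_k=p^{-k}P^{\circ k}$. So your telescoping bound is the same contraction (ratio about $p$) written out directly, and your pullback $\Phi=p^{-T}\Phi\circ P^{\circ T}$ is the paper's extension to the union \er{124}.

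The only differences are minor. You prove uniqueness by comparing Taylor coefficients, which gives uniqueness among all analytic germs rather than only within the Hardy-space ball. You also justify, via normality of the iterates, that this union covers the whole component, a point the paper only asserts.
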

{\it Proof.} Instead of $\Phi(z)$, we are looking for $\Psi(z)$ defined by
\[\lb{119}
 \Phi(z)=z+z^2\Psi(z).
\]
Substituting \er{113} into \er{111} and using \er{112}, we obtain
\[\lb{120}
 \Psi(z)=\frac{P(z)-pz}{pz^2}+\frac{P(z)^2}{pz^2}\Psi(P(z))=:\frac{P(z)-pz}{pz^2}+\cT(\Psi)(z).
\]
The linear mapping $\cT:\mathbb{H}^{\iy}_{\d}\to\mathbb{H}^{\iy}_{\d}$ acts on the corresponding Hardy space of analytic functions defined in the disc $\mathbb{S}_{\d}=\{z:\ |z|<\d\}$, for any $\d<1$. This is because $P(z)$ is a contraction mapping, i.e. 
\[\lb{121}
 |P(z)|\le p|z|+p_2|z|^2+...\le p|z|+(1-p)|z|^2\le\d(p+(1-p)\d)<\d\ {\rm for}\ |z|\le\d<1.
\]
Moreover, for $|z|\le\d$, we have
\[\lb{122}
 \lt|\frac{P(z)^2}{pz^2}\rt|\le{p\lt(1+\d\frac{1-p}{p}\rt)}<1,\ {\rm for\ all\ sufficiently\ small}\ \d>0.
\]
Hence, $\|\cT\|_{\mathbb{H}^{\iy}_{\d}\to\mathbb{H}^{\iy}_{\d}}<1$ for such small $\d>0$, and the unique solution of \er{120} exists
\[\lb{123}
 \Psi(z)=(1-\cT)^{-1}\frac{P(z)-pz}{pz^2}.
\]
This formula can be viewed as an application of the fixed-point idea. Hence, as a fixed point solution of \er{118}, $\Phi(z)$ can be computed by \er{118a}. Note that the function in the RHS of \er{123} is analytic at $z=0$, since $P(z)\sim pz$ for $z\to0$. By $\Phi(z)=\Phi(P(z))/p$, we can extend this solution analytically to  
\[\lb{124}
 z\in\bigcup_{n=1}^{+\iy}\underbrace{P^{-1}\circ...\circ P^{-1}}_{n}(\mathbb{S}_{\d}).
\]
This union contains the corresponding open connected component of the filled Julia set. \BBox

\no{\bf Proof  of Theorem \ref{mainT}.} We take $\vt_1=\vt-\ve$ for some small $\ve>0$ so that $\vt_1>\pi/2$. By Corollary \ref{C1}, if $|\Im z|\le\vt_1/\ln E$ then $\Pi(E^z)\in\mJ_P(0)$ and the function
\[\lb{125}
K(z)=z^{-p}\Phi(\Pi(E^z))
\]
is defined and analytic. Moreover, it satisfies $K(z+1)=K(z)$ and, hence, can be expanded into the Fourier series
\[\lb{126}
 K(z)=\sum_{m=-\iy}^{m=+\iy}\k_me^{2\pi\mathbf{i}mz}.
\]
One may compute the coefficients by
\[\lb{127}
 \k_m=\int_{[0,1]+\sign(m)\mathbf{i}\frac{\vt_1}{\ln E}}e^{-2\pi\mathbf{i}mz}K(z)dz,
\]
where we assume $\sign(0)=0$. Denoting $K_0=\max|K(z)|$ in the rectangle $z\in[0,1]\ts[-\vt_1/\ln E,\vt_1/\ln E]$ and using \er{127}, we obtain the estimates
\[\lb{128}
 |\k_m|\le K_0e^{-|m|\frac{\vt_1}{\ln E}}.
\] 
Generally, for the one-periodic functions
\[\lb{129}
K(z)^n=\sum_{m=-\iy}^{m=+\iy}\k_m^{*n}e^{2\pi\mathbf{i}mz},\ \ \ n\in\N,
\]
we have
\[\lb{130}
|\k_m^{*n}|\le K_0^ne^{-|m|\frac{2\pi\vt_1}{\ln E}}.
\] 
Identity \er{125} can be rewritten as
\[\lb{131}
 \Phi(\Pi(z))=z^{\log_Ep}K(\log_Ez),\ \ \ |\arg z|\le\vt_1.
\]
Since $\Pi(z)\to0$ uniformly for $z\to\iy$ in the sector $|\arg z|\le\vt_1$, we can rewrite \er{131} as
\[\lb{132}
 \Pi(z)=\sum_{n=1}^{+\iy}\phi_nz^{n\log_Ep}K(\log_Ez)^n,\ \ \ |\arg z|\le\vt_1,\ |z|\ge R_2,
\]
with some large $R_2>0$. Here,
\[\lb{133}
 \Phi^{-1}(z)=\sum_{n=1}^{+\iy}\phi_nz^n
\]
is analytic in some small neighborhood of $z=0$, where $\Phi(z)\sim z$. Due to the Cauchy estimates, we have
\[\lb{134}
 |\phi_n|\le C_3R_3^n,
\]
for some constants $C_3,R_3>0$. Since, for $x>0$, $e^{zx}$ is bounded along lines parallel to the imaginary axis, and $\Pi(z)$ uniformly tends to $0$ for large $z$ in the sector $|\arg z|\le\vt_1$, we can write
\[\lb{135}
 w(x)=\frac1{2\pi\mathbf{i}}\int_{\mathbf{i}\R}\Pi(z)e^{zx}dz=\frac1{2\pi\mathbf{i}}\int_{a+\mathbf{i}\R}\Pi(z)e^{zx}dz,
\]
for any $a>0$. The existence of both integrals - in fact, the existence of one of them leads, obviously, to the existence of the second one - will be discussed below. Taking $a$ so large that \er{132} is valid and substituting it into \er{135}, we get
\[\lb{136}
w(x)=\frac1{2\pi\mathbf{i}}\int_{a+\mathbf{i}\R}\sum_{n=1}^{+\iy}\phi_nz^{n\log_Ep}K(\log_Ez)^ne^{zx}dz
\]
For any sufficiently large $a>0$, the tail in \er{136} is uniformly small and converges absolutely, since $n\log_Ep<-1$ for all large $n$, and we can estimate the tail by the tail of a converging geometric series. For the moment, denote $\wt K_n(w):=K(w)^n$. Suppose $\g:=n\log_Ep\ge-1$. Still $\g<0$ because $p<1$, and we can apply $m$ times the integration by parts
\[\lb{137}
 \int_{a+\mathbf{i}\R}z^{\g}\wt K_n(\log_Ez)e^{zx}dz=\int_{a+\mathbf{i}\R}z^{\g-1}(x^{-1}\wt K_n(\log_Ez)-E^{-1}\wt K_n'(\log_Ez))e^{zx}dx,
\]
to make the integral absolutely convergent with $\g-m<-1$. We can apply this integration by parts many times since the multiplier in the brackets is $\log$-periodic as initial $\wt K_n(\log_Ez)$. 
Thus, the integrals of the first terms in \er{136} exist, and we can write
\[\lb{138}
w(x)=\sum_{n=1}^{+\iy}\frac{\phi_n}{2\pi\mathbf{i}}\int_{a+\mathbf{i}\R}z^{n\log_Ep}K(\log_Ez)^ne^{zx}dz.
\]
On the line $a+\mathbf{i}\R$, $|\Im(\log_Ez)|\le\pi/(2E)<\vt_1/E$, and we can use expansion \er{129}, which, by \er{130}, converges in $m$ exponentially fast, in the integrals appearing in \er{138}, namely
\begin{multline}\lb{139}
 \frac{1}{2\pi\mathbf{i}}\int_{a+\mathbf{i}\R}z^{n\log_Ep}K(\log_Ez)^ne^{zx}dz=\frac{1}{2\pi\mathbf{i}}\int_{a+\mathbf{i}\R}\sum_{m=-\iy}^{m=+\iy}\k_m^{* n}z^{\frac{n\ln p+2\pi\mathbf{i}m}{\ln E}}e^{zx}dz\\
 =\sum_{m=-\iy}^{m=+\iy}\frac{\k_m^{* n}}{2\pi\mathbf{i}}\int_{a+\mathbf{i}\R}z^{\frac{n\ln p+2\pi\mathbf{i}m}{\ln E}}e^{zx}dz=\sum_{m=-\iy}^{m=+\iy}\frac{\k_m^{*n}x^{-\frac{n\ln p+2\pi\mathbf{i}m}{\ln E}}}{x\G(-\frac{n\ln p+2\pi\mathbf{i}m}{\ln E})},
\end{multline}
where the Hankel representation of the Gamma function, see Example 12.2.6 on page 254 of \cite{WW}, is used. Substitution of \er{139} into \er{138} gives the double series. Let us show that this double series converges absolutely. We estimate the logarithms of its terms. For any fixed $\d>0$, the Gamma function satisfies 
\[\lb{140}
 \ln\G(z)=(z-\frac12)\ln z-z+O(1),\ \ \ \Re z\ge\d.
\] 
Thus, for all $n\in\N$ and $m\in\Z$, we obtain
\[\lb{141}
 \Re\ln\G(-\frac{n\ln p+2\pi\mathbf{i}m}{\ln E})\ge -n\log_Ep\cdot\ln(-n\log_Ep)+n\log_Ep-\frac{\pi^2|m|}{\ln E}-C_4,
\]
with some constant $C_4>0$. Estimate \er{141} along with \er{130} and \er{134} gives
\[\lb{142}
 \lt|\frac{\phi_n\k_m^{*n}x^{-\frac{n\ln p+2\pi\mathbf{i}m}{\ln E}}}{\G(-\frac{n\ln p+2\pi\mathbf{i}m}{\ln E})}\rt|\le Ae^{({\log_Ep}+\ve)n\ln n+\frac{\pi(\pi-2\vt_1)}{\ln E}|m|},\ \ \ \forall\ve>0,
\]
where $A=A(\ve, x)>0$ is uniformly bounded for $x$ belonging to any compact subset of $(0,+\iy)$. Note that we split off $-\ve n\ln n$ in \er{142} to compensate for the terms of order $n$ under the exponent. Here, for convenience, $\ve>0$ can be the same as in $\vt_1=\vt-\ve$ taken at the beginning. \BBox

\section*{Acknowledgements} 
This paper is a contribution to the project S1 of the Collaborative Research Centre TRR 181 "Energy Transfer in Atmosphere and Ocean" funded by the Deutsche Forschungsgemeinschaft (DFG, German Research Foundation) - Projektnummer 274762653. 



\bibliographystyle{abbrv}

\end{document}